\pgfplotsset{compat=1.12,axis lines=center}
\newcommand{\PP}{ \mathrm{SymPoly}}
\newcommand{\QQ}{ \mathbb{Q}}
\newcommand{\floor}[1]{\left\lfloor #1 \right\rfloor}
\newcommand{\nmrhalf}{\lfloor (n-r)/2 \rfloor}
\newcommand{\Perm}{\mathrm{perm}}
\newcommand{\MP}{\mathrm{MP}}
\newcommand{\MPD}{\mathrm{MPD}}
\newcommand{\mpd}{\mathrm{mpd}}
\newcommand{\SMPDE}{\mathrm{SgnMPDE}}
\newcommand{\MPE}{\mathrm{MPE}}
\newcommand{\SMPE}{\mathrm{SgnMPE}}
\newcommand{\mpa}{\mathrm{mp}}  
\newcommand{\SSS}{\mathfrak{S}}
\newcommand{\AAA}{\mathcal{A}}
\newcommand{\inv}{\mathrm{inv}}
\newcommand{\des}{\mathrm{des}}
\newcommand{\exc}{\mathrm{exc}}
\newcommand{\stat}{\mathrm{stat}}
\newcommand{\Exc}{\mathrm{Exc}}
\newcommand{\sgn}{\mathrm{sgn}}
\newcommand{\SD}{ \mathrm{SgnDes}}
\newcommand{\DSE}{ \mathrm{DSgnExc}}
\newtheorem*{theoremaux}{Theorem \theoremauxnum}
\gdef\theoremauxnum{1}
\newtheorem{lemma}{\bf Lemma}[section]
\newtheorem{theorem}{\bf Theorem}[section]
\newtheorem{proposition}[lemma]{\bf Proposition}
\newtheorem{definition}{\bf Definition}[section]
\newtheorem{remark}{\bf Remark}[section]
\title{Sign-balance of excedances over mod-k-alternating permutations and gamma-positivity}
\author{Hiranya Kishore Dey \thanks{Department of Mathematics, Indian Institute of Science, Bangalore 560012, India. Email: hiranya.dey@gmail.com, hiranyadey@iisc.ac.in} \and Iswar Mahato\thanks{Department of Mathematics, Indian Institute of Technology Bombay, Mumbai 400076, India. Email: iswarmahato02@gmail.com, iswar@math.iitb.ac.in}}
\date{\today}
\begin{document}
\maketitle

\begin{abstract}
A permutation is called {\it mod-k-alternating} if its entries are restricted to having the same remainder as the index, modulo some integer $k \geq 1.$ In this paper, we find the sign-balance for 
mod-k-alternating permutations with respect to the statistic excedance. Moreover, we study the sign-balance for excedances over mod-k-alternating derangements. The results are obtained by constructing suitable matrices and connecting their determinants with the signed excedance enumeration of mod-k-alternating permutations. As an application of the signed excedance enumeration, we prove that when $n \equiv k \pmod {2k}$, the excedance enumerating polynomials over the even and odd mod-k-alternating permutations, starting with a fixed remainder, are gamma-positive. 
%The sign-balance for parity alternating derangements is also obtained. Finally, we show that the number of Kneser graphs with a given determining number $r$ is an increasing function of $r$.
\end{abstract}

\textbf{Keywords:} Mod-k-alternating permutations, excedance, derangement, gamma-positive.

\medskip 

{\bf AMS Subject Classification (2020):} 05A15, 15A15, 05E99.

%%%%%%%%%%%%%%%%%%%%%%%%%%%%%%%%%%%%%%%%%%%%%%%%%%%%%%%%%%%%%%%%%%
\section{Introduction}\label{sec:intro}

Enumerating permutations by different statistics is an interesting and well-studied area in combinatorics. The most famous statistics on a permutation are descent, excedance, inversion and major-index, and their enumerators are classical results in combinatorics (see \cite{Foata_netto}, \cite{Macmahon-indicesofperm}). One can look at the course notes of Foata and Han \cite{Foata-Han-qseries} and the book of Petersen \cite{Petersen-book} for a self-contained introduction to this area. 

For a positive integer $n$, let $\mathfrak{S}_n$ be the symmetric group on the set $[n]=\{1,2,\hdots,n\}$. 
For a permutation $\pi$, written in one-line notation as $\pi=\pi_1 \pi_2 \dots \pi_n$, let $\mathrm{Des}(\pi)= \{ i \in [n-1]: \pi_i> \pi_{i+1} \}$ be its set of descents and let $\mathrm{des}(\pi)= |\mathrm{Des}(\pi)|.$ The classical Eulerian number, denoted by $A_{n,k}$, is the number of permutations in $\SSS_n$
with $k$ descents and $A_n(t)= \sum _{\pi \in \SSS_n} t^{\des(\pi)} $ is the Eulerian polynomial. Define   $\Exc(\pi)= \{ i \in [n-1]: \pi_i > i \}$ as its set of excedances and let $\mathrm{exc}(\pi)= |\mathrm{Exc}(\pi)|.$ It is well known (see, \cite{Macmahon-indicesofperm}) that excedances and descents are equidistributed when summed over the elements of $\SSS_n,$ that is,
$\sum_{\pi \in \SSS_n}t^{\des(\pi)}= \sum_{\pi \in \SSS_n} t^{\exc(\pi)}.$ 

For $\pi \in \SSS_n,$ the number of inversions of $\pi$ is defined as 
$\inv(\pi)= |\{(i,j):i<j, \pi_i> \pi_j \}| $ and the sign of a permutation is defined as $\sgn(\pi)=(-1)^{\inv(\pi)}.$ It is well-known that the sign of a permutation $\pi \in \SSS_n$ is $1$ if and only if $\pi$ is even, that is, it belongs to the alternating group $\AAA_n$. The sign-balance of a set $T_n \subseteq \SSS_n,$ respecting a statistic $\stat$, is given by 
$$ \sum _{\pi \in T_n} (-1)^{\inv(\pi)} t^{\stat(\pi)}.$$ 

The sign-balance of $\SSS_n$ with respect to the statistic $\des$ was first considered by Loday in \cite{Loday-oper}. He defined $\SD(n,t) = \sum_{\pi \in \SSS_n}(-1)^{\inv(\pi)} t ^{\des(\pi)} $ as the signed descent enumerator and conjectured the following recurrence relation, which was proved by D{\'e}sarm{\'e}nien and Foata in \cite[Theorem 1]{Foata-desar-signed-eul}. 
	
     \begin{theorem}[D{\'e}sarm{\'e}nien-Foata]
     % [{\cite[Theorem 1]{Foata-desar-signed-eul}}]
	\label{thm:Foata_desar_signed_des}
	For positive integers $n$, we have 
	 \begin{eqnarray*}
	\SD_n(t) & = & \begin{cases}
	 (1-t)^m A_m(t) & \text {if $n=2m$},\\
	 (1-t)^m A_{m+1}(t) & \text {if $n=2m+1$.}
	 \end{cases}
	 \end{eqnarray*} 
	\end{theorem}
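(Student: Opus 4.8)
The plan is to evaluate $\SD_n(t)=\sum_{\pi\in\SSS_n}\sgn(\pi)t^{\des(\pi)}$ by a sign-reversing, descent-preserving involution that cancels most permutations in pairs, leaving only a family of ``paired'' permutations whose contribution will factor exactly as claimed. The involution $\Phi$ is built from the blocks of consecutive values $\{2k-1,2k\}$. Given $\pi$, let $k^{\ast}$ be the least $k\le\floor{n/2}$ for which the values $2k-1$ and $2k$ occupy non-adjacent positions in the one-line notation, and let $\Phi(\pi)$ be obtained by exchanging these two values; if no such $k$ exists, $\pi$ is declared a fixed point. Exchanging two values is a transposition, so $\Phi$ reverses $\sgn$. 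The crucial observation is that interchanging $v=2k-1$ and $v+1=2k$ preserves $\Des(\pi)$ whenever they are non-adjacent: for any neighbour $w\notin\{v,v+1\}$ one has $w<v\iff w<v+1$ and $w>v\iff w>v+1$ (no integer lies strictly between $v$ and $v+1$), so no descent relation flips, and non-adjacency guarantees $v$ and $v+1$ are never compared against each other. Since $\Phi$ keeps the lower-index pairs adjacent and leaves $2k^{\ast}-1,2k^{\ast}$ in the same non-adjacent positions, $k^{\ast}$ is unchanged and $\Phi$ is an involution. Hence $\SD_n(t)$ equals the signed descent sum over the fixed points alone.

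Next I would describe the fixed points. A fixed point is a permutation in which, for every $k$, the values $2k-1$ and $2k$ sit in two adjacent positions, so these value-pairs partition $[n]$ into blocks of two consecutive cells (plus one leftover cell when $n$ is odd, occupied by the unpaired largest value $n$). For $n=2m$ this is a perfect matching of the path on $2m$ vertices, which is unique, forcing the blocks to be $\{1,2\},\{3,4\},\dots,\{2m-1,2m\}$; for $n=2m+1$ the leftover cell must sit at an odd position, so that the two flanking sub-paths have even length, giving $m+1$ admissible placements. Thus a fixed point is encoded by an assignment of the $m$ value-pairs to the domino blocks, the placement of the singleton in the odd case, and an internal order (ascending or descending) of each domino.

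Finally I would compute the fixed-point contribution via a block-to-rank reduction. Reading the blocks left to right and recording their value-ranks yields a permutation $\rho$ of the $m$ pairs in the even case, or of the $m$ pairs together with the singleton (which always has the largest rank) in the odd case, so $\rho\in\SSS_m$ respectively $\rho\in\SSS_{m+1}$, with every such $\rho$ arising exactly once. Because distinct blocks occupy disjoint value-intervals, a between-block position is a descent precisely when $\rho$ has a descent there, while each internally descending domino contributes one extra descent, so $\des(\pi)=\des(\rho)+\#\{\text{descending dominoes}\}$. The same interval structure makes every between-block inversion count even (a $2\times2$ or $1\times2$ contribution), whence $\sgn(\pi)=(-1)^{\#\{\text{descending dominoes}\}}$ is independent of $\rho$. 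Summing, the $m$ independent domino orientations contribute $\prod(1-t)=(1-t)^m$ and the block permutation contributes $\sum_{\rho}t^{\des(\rho)}$, yielding $(1-t)^mA_m(t)$ for $n=2m$ and $(1-t)^mA_{m+1}(t)$ for $n=2m+1$. I expect the main obstacle to be the two structural facts underpinning this scheme, namely the descent-preservation of $\Phi$ and the uniqueness and parity constraints of the block tilings; once these are secured, the sign and descent bookkeeping splits multiplicatively and the factorisation falls out.
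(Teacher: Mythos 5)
Your proof is correct, but there is nothing in the paper to compare it against: the paper states Theorem \ref{thm:Foata_desar_signed_des} as a quoted classical result, attributing the original proof to D{\'e}sarm{\'e}nien and Foata (who established Loday's conjectured recurrence) and pointing to Wachs's sign-reversing involution as an alternate, bijective proof. What you have reconstructed is essentially Wachs's argument. The three pillars of your proposal all hold up: (i) swapping the non-adjacent values $2k^{\ast}-1$ and $2k^{\ast}$ for the least such $k^{\ast}$ is a sign-reversing, descent-preserving involution on non-fixed points, since no integer lies strictly between the two swapped values and no other value moves, so $k^{\ast}$ is stable under the map; (ii) the fixed points are exactly the permutations whose value-pairs occupy dominoes of adjacent cells, and one-dimensional domino tilings are forced, giving the unique tiling for $n=2m$ and the $m+1$ odd placements of the singleton cell (holding the value $n$) for $n=2m+1$; (iii) since distinct blocks occupy disjoint value intervals, cross-block inversions come in batches of size $4$ or $2$, so $\sgn(\pi)=(-1)^{\#\{\text{descending dominoes}\}}$ while $\des(\pi)=\des(\rho)+\#\{\text{descending dominoes}\}$, and summing over the $2^m$ orientations and over $\rho$ factors the fixed-point contribution as $(1-t)^m A_m(t)$ for $n=2m$ and $(1-t)^m A_{m+1}(t)$ for $n=2m+1$ (a check at $n=3$ gives $1-t^2=(1-t)A_2(t)$, as it should). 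One could quibble that the even parity of cross-block inversion counts deserves the explicit one-line computation ($2\times 2=4$ and $1\times 2=2$ pairs per rank-inverted block pair), but your parenthetical already contains it.
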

 
Later, Wachs in \cite{Wachs-descent} gave a sign-reversing involution on $\SSS_n$, thereby giving an alternate, bijective proof of Theorem \ref{thm:Foata_desar_signed_des}. Gessel and Simion in \cite[Corollary 2]{Wachs-descent} gave an elegant factorial-type product formula for the refined sign-balance of $\SSS_n$ respecting major indices. 

There have been many works on the sign-balance of some restricted subsets of the symmetric group $\SSS_n$. For example, Simion and Schmidt \cite{Simion-Schmidt} determined the sign-balance for the 321-avoiding permutations in $\SSS_n$ with respect to various statistics. Adin and Roichman \cite{Adin-Roichman321} gave a refinement, respecting the position of the last descent. Reifegerste  \cite{Reifegerste-321-avoiding-sign} proved another refinement, respecting the length of the longest increasing subsequence. The sign-balance of $\SSS_n$ and its various subsets, respecting the statistic $\exc$ have been investigated by Mantaci in \cite{Mantaci-thesis}. He proved the following remarkable result on signed excedance enumeration.  

%Barnabei, Bonetti and Silimbani \cite{Barnabei-etal} have enumerated signed ascents over involutions using properties of the RSK correspondence. Eu et. al  \cite{Senpen-sgnbalance} proved the refined sign-balance of simsun permutations, respecting the number of descents. 

%The sign-balance of $\SSS_n$ and its various subsets, respecting the statistic $\exc$ have also been investigated before. For example, Mantaci \cite{Mantaci-thesis} proved the following result on signed excedance enumeration. 

\begin{theorem}[Mantaci]
	\label{thm:Mant_signed_exc}
	For positive integers $n$, we have 
	 \begin{equation*}
	 \sum_{\pi \in \SSS_n}(-1)^{\inv(\pi)} t ^{\exc(\pi)} =
	 (1-t)^{n-1} . 
	\end{equation*}
	\end{theorem}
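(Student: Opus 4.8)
The plan is to realize the signed excedance enumerator as the determinant of an explicit $n \times n$ matrix and then evaluate that determinant by elementary row operations; this matches the paper's stated philosophy of connecting determinants with signed excedance enumeration. First I would introduce the matrix $M = (m_{ij})_{1 \le i,j \le n}$ defined by $m_{ij} = t$ when $j > i$ and $m_{ij} = 1$ when $j \le i$. The point of this choice is that the Leibniz expansion of $\det M$ runs over exactly the right objects with exactly the right weights.

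Indeed, by definition $\det M = \sum_{\pi \in \SSS_n} \sgn(\pi) \prod_{i=1}^{n} m_{i,\pi(i)}$. For a fixed $\pi$, the factor $m_{i,\pi(i)}$ equals $t$ precisely when $\pi(i) > i$, that is, when $i \in \Exc(\pi)$, and equals $1$ otherwise; hence $\prod_{i=1}^{n} m_{i,\pi(i)} = t^{\exc(\pi)}$. Since $\sgn(\pi) = (-1)^{\inv(\pi)}$, this yields $\det M = \sum_{\pi \in \SSS_n} (-1)^{\inv(\pi)} t^{\exc(\pi)}$, so the theorem reduces to the claim that $\det M = (1-t)^{n-1}$. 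Note that the position $i = n$ never contributes an excedance because $\pi(n) \le n$, and correspondingly $m_{n,j} = 1$ for every $j$, so there is no conflict with the convention restricting $\Exc$ to $[n-1]$.

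To evaluate the determinant, I would apply the row operations $R_i \mapsto R_i - R_{i+1}$ for $i = 1, \dots, n-1$, each of which leaves the determinant unchanged. In row $i$ these cancel every entry except the one in column $i+1$, where the $t$ of row $i$ meets the $1$ of row $i+1$ to leave $t-1$; the last row is untouched and remains $(1, 1, \dots, 1)$. In the resulting matrix each of the first $n-1$ rows has a single nonzero entry $t-1$ in position $(i, i+1)$, so in the Leibniz expansion the only permutation that survives must send $i \mapsto i+1$ for $i < n$, forcing $n \mapsto 1$, namely the long cycle of sign $(-1)^{n-1}$. Its contribution is $(-1)^{n-1}(t-1)^{n-1} = (1-t)^{n-1}$, which finishes the argument.

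I expect the main difficulty to be bookkeeping rather than conceptual. The delicate point is verifying that the chosen row operations annihilate exactly the intended entries, which I would do by splitting into the cases $j \le i$, $j = i+1$, and $j > i+1$, and then confirming that after reduction only the full cycle contributes with the stated sign. Matching the strict-inequality convention $\pi_i > i$ for excedances against the entries on and below the diagonal of $M$ is the one spot where an off-by-one slip could occur, so I would sanity-check the construction on a small case such as $n = 3$, where $M$ indeed has determinant $(1-t)^2$.
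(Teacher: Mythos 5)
Your proof is correct and follows essentially the same route as the paper: the paper establishes Mantaci's formula as the $k=1$ case of Theorem \ref{thm:signed excedance-modk}, using exactly your matrix (Proposition \ref{prop:mn1} with $k=1$) and the same Leibniz-expansion identification, then evaluating the determinant by the column operations $C_i' = C_i - C_{i+1}$, which produce an upper triangular matrix with diagonal $(1-t,\dots,1-t,1)$. Your only deviation is cosmetic: you use the row operations $R_i \mapsto R_i - R_{i+1}$ and extract the value from the unique surviving permutation (the long cycle of sign $(-1)^{n-1}$) rather than from a triangular form, and both computations are equally valid.
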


Later, Mantaci in \cite{Mantaci-binomial} gave a combinatorial proof of this result. In \cite{Sivasubramanian-adv-exc}, Sivasubramanian gave an alternate proof of Mantaci’s result by evaluating the determinant of an appropriately defined $n \times n$ matrix. Motivated by these works, in this article, we study the sign-balance of the statistic $\exc$ over mod-k-alternating permutations. The results of this article can be considered as an application of linear algebraic techniques in the area of combinatorics, which are obtained by constructing suitable matrices and connecting their determinants with the signed excedance enumeration of mod-k-alternating permutations.

% In this paper, we at first focus our attention to a 
% particular interesting subgroup of the symmetric group, namely the subgroup consisting of {\it parity alternating permutations} which we define next. 

%Tanimoto \cite{Tanimoto-parityalt-advances, Tanimoto-parityalt-annals} introduced the notion of {\it parity alternating permutation}. A permutation is called a  parity alternating permutation, if its entries assume odd and even integers alternately. It is easy to see that such permutations form a subgroup of the symmetric group of degree $n$ and this subgroup is denoted by $\PAP_n$ and we denote the set of non parity alternating permutations in $\SSS_n$ by $\NPAP_n.$ Tanimoto \cite[Theorem 3]{Tanimoto-parityalt-advances} determined the sign-balance of $\PAP_n$, respecting the statistic descent, and obtained that 
%\begin{equation}
%\label{eqn:tanimoto_parity_advancesinappl}
%|\{ \pi \in  \NPAP_n \cap \AAA_n : \des(\pi)=k \}|= |\{ \pi \in \NPAP_n \cap( \SSS_n- \AAA_n) : \des(\pi)=k \}|.
%\end{equation}
%When $n\geq 3$, descent and excedances are not equidistributed over $\NPAP_n$. It is natural to ask whether an excedance counterpart of Equation \eqref{eqn:tanimoto_parity_advancesinappl} is true. Keeping this in mind, we are interested in enumerating the sign-balance of the subgroup $\PAP_n$ respecting the statistic $\exc$ and thereby getting analogous results to \eqref{eqn:tanimoto_parity_advancesinappl}. We actually do this in a much more general setting:

A permutation is called a {\it parity alternating permutation}, if its entries assume odd and even integers alternately. In 2010, Tanimoto \cite{Tanimoto-parityalt-annals} introduced the notion of parity alternating permutation and studied its various combinatorial properties. In \cite{Tanimoto-parityalt-advances}, the author determined the sign-balance of parity alternating permutation, respecting the statistic descent, and described the signed Eulerian numbers by the parity alternating permutations. Recently, Kebede and Rakotondrajao \cite{kebede-rakotondrajao} studied the parity alternating permutations starting with an odd integer. Very recently,  Alexandersson et. al. \cite{alexandersson-rebede} generalized the notion of parity alternating permutations to {\it mod-k-alternating-permutations}, which is defined as follows:

\begin{definition}
For any positive integer $k$, a mod-k-alternating permutation of size $n$ is a permutation $\pi \in \SSS_n$ such that $\pi_i \equiv i \pmod k$ for all $i=1,2,\dots,n.$
\end{definition}

 Let $\MP_n^k$ be the set of such mod-k-alternating permutations in $\SSS_n$ and let $\mpa_n^k$ denote the cardinality of the set $\MP_n^k$. It is easy to see that $\MP_n^k$  is a subgroup of $\SSS_n$. Note that 
 $$\mpa_n^k=\Big(\Big \lceil \frac{n}{k}\Big \rceil !\Big)^{j}\Big(\Big \lfloor \frac{n}{k}\Big \rfloor !\Big)^{k-j},$$
where $j$ is the remainder when $n$ is divided by $k$. For example, $\MP_5^3$ has $(2!)^2\cdot (1!)^1=4$ elements such as $12345, 15342, 42315$ and $45312$.

 Alexandersson et. al. \cite{alexandersson-rebede} also introduced the notion of {\it mod-k-alternating permutations starting with the remainder $r \pmod k$} as follows:

\begin{definition}
Let $\MP_{n,r}^k$ denote the set of permutations of length $n$ that satisfy $\pi _i - i \equiv r-1 \pmod k$ for $1 \leq i \leq n$. Clearly, when $r=1$, we have $\MP_{n,1}^k= \MP_{n}^k$. Moreover, let $\mpa_{n,r}^k= |\MP_{n,r}^k|.$
\end{definition}

 % Being inspired by the work of Sivasubramanian, in this work, we consider the following
In this article, our aim is to study the signed excedance enumeration over $\MP_{n,r}^k$.  We define the polynomials
\[ \MPE_{n,r}^k(t)= \sum _{\pi \in \MP_{n,r}^k} t^{\exc(\pi)} \hspace{3 mm} \mbox{ and } \hspace{3 mm}
\SMPE_{n,r}^k(t)= \sum _{\pi \in \MP_{n,r}^k} (-1)^{\inv(\pi)} t^{\exc(\pi)},\] and prove the following result, which generalizes Theorem \ref{thm:Mant_signed_exc} proved by Mantaci in \cite{Mantaci-thesis}.

\begin{theorem}
\label{thm:signed excedance-modk}
Let $n,k$ be positive integers. If $n$ is not divisible by $k$, say $n=mk+j$ with $1 \leq j \leq k-1$, we have 
\[\SMPE_n^k(t)= \SMPE_{n,1}^k(t)= (1-t)^{n-k} .  \] 
If $n$ is divisible by $k$, say $n=mk$, we have 
\begin{eqnarray*}
\SMPE_{n,r}^k(t)  & = & \begin{cases}
(1-t)^{n-k} & \text {if $r=1$}, \\
(-1)^{(mr-1)(k-r+1)} t^{k-r+1} (1-t)^{n-k} & \text {if $2 \leq r \leq k$}.
 \end{cases}
\end{eqnarray*} 
\end{theorem}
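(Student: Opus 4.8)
The plan is to express $\SMPE_{n,r}^k(t)$ as the determinant of a weighted $n\times n$ matrix, in the spirit of Sivasubramanian's determinantal proof of Theorem~\ref{thm:Mant_signed_exc}. Define $M=M_{n,r}^k$ by
\[ M_{ij}=\begin{cases} t & \text{if } j>i \text{ and } j\equiv i+r-1 \pmod k,\\ 1 & \text{if } j\le i \text{ and } j\equiv i+r-1 \pmod k,\\ 0 & \text{otherwise.}\end{cases}\]
In the expansion $\det M=\sum_{\pi\in\SSS_n}\sgn(\pi)\prod_i M_{i\pi_i}$ a term is nonzero exactly when $\pi_i\equiv i+r-1\pmod k$ for all $i$, that is, when $\pi\in\MP_{n,r}^k$, and then $\prod_i M_{i\pi_i}=t^{\exc(\pi)}$ because position $i$ contributes $t$ precisely when $\pi_i>i$. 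Hence $\det M_{n,r}^k=\SMPE_{n,r}^k(t)$, and the task reduces to evaluating this determinant.

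The key structural move is to reindex both rows (positions) and columns (entries) by grouping them according to residue modulo $k$. Applying the same permutation to rows and to columns is a conjugation, so $\det M$ is unchanged. A position of residue $c$ admits only entries of residue $c'\equiv c+r-1\pmod k$, hence in the reindexed matrix the unique nonzero block in row-block $c$ lies in column-block $\sigma(c)$, where $\sigma$ is the shift $c\mapsto c+(r-1)$ on $\mathbb{Z}/k\mathbb{Z}$. Expanding through this block-permutation structure yields
\[ \det M=\sgn(\sigma)^{m}\prod_{c=1}^{k}\det B_c, \]
where $B_c$ is the block attached to residue $c$ and, when $n=mk$, every $B_c$ is $m\times m$. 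For $r=1$ we have $\sigma=\mathrm{id}$ and $c'=c$, so $M$ is block-diagonal (the blocks may have unequal sizes when $k\nmid n$), and within each block the excedance comparison is simply $j>i$; each block is therefore the matrix underlying Theorem~\ref{thm:Mant_signed_exc}, of determinant $(1-t)^{s-1}$ on a class of size $s$. Multiplying over the $k$ residue classes gives $(1-t)^{\sum_c (s_c-1)}=(1-t)^{n-k}$, which is the asserted value whenever $r=1$, in both cases $k\mid n$ and $k\nmid n$.

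Assume now $2\le r\le k$, so $n=mk$. Writing the positions of residue $c$ as $p_a=c+(a-1)k$ and the admissible entries of residue $c'$ as $q_b=c'+(b-1)k$ for $1\le a,b\le m$, the sign of $q_b-p_a=(c'-c)+(b-a)k$ depends only on whether the residue shift wraps past $k$. If $c+r-1\le k$ (there are $k-r+1$ such residues) then $c'-c=r-1$ and $q_b>p_a\iff b\ge a$, so $B_c$ has $t$ on and above the diagonal and $1$ below; its determinant is $(-1)^{m-1}t(1-t)^{m-1}$. If $c+r-1>k$ (the remaining $r-1$ residues) then $c'-c=r-1-k$ and $q_b>p_a\iff b>a$, so $B_c$ is the matrix of Theorem~\ref{thm:Mant_signed_exc}, with determinant $(1-t)^{m-1}$. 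Both determinant evaluations are short, each obtained by subtracting consecutive rows.

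Combining the block determinants, the non-wrapping residues contribute the factor $t^{k-r+1}$ (one power of $t$ each), all $k$ blocks together contribute $(1-t)^{(m-1)k}=(1-t)^{n-k}$, and the accumulated block sign is $(-1)^{(m-1)(k-r+1)}$. It remains to compute $\sgn(\sigma)$: the one-line notation of the shift $c\mapsto c+(r-1)$ is a rotation whose only inversions are the $(k-r+1)(r-1)$ pairs across its two increasing runs, so $\sgn(\sigma)=(-1)^{(r-1)(k-r+1)}$ and the block-permutation factor is $(-1)^{m(r-1)(k-r+1)}$. The main obstacle is the sign bookkeeping: one must collect these two sources of signs and verify that the exponent simplifies as
\[ m(r-1)(k-r+1)+(m-1)(k-r+1)=(mr-1)(k-r+1), \]
which yields $\SMPE_{n,r}^k(t)=(-1)^{(mr-1)(k-r+1)}t^{k-r+1}(1-t)^{n-k}$, as claimed.
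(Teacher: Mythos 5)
Your proposal is correct and follows essentially the same route as the paper: the same weighted matrices with $\det M_{n,r}=\SMPE_{n,r}^k(t)$, the same simultaneous row/column reindexing by residue classes, and the same block determinants $\det(A_m)=(1-t)^{m-1}$ and $\det(B_m)=(-1)^{m-1}t(1-t)^{m-1}$. The only (cosmetic) difference is the sign bookkeeping: you track the block-permutation sign $\sgn(\sigma)^m=(-1)^{m(r-1)(k-r+1)}$ of the residue rotation, whereas the paper writes the conjugated matrix as a $2\times 2$ anti-diagonal array of Kronecker blocks and picks up $(-1)^{m^2(r-1)(k-r+1)}$ from the single block swap --- the two agree since $m^2\equiv m \pmod 2$, and both simplify to the exponent $(mr-1)(k-r+1)$.
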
 

% We will see that using Theorem \ref{thm:Mant_signed_exc} and Theorem \ref{thm:signed excedance-modk}, we can find a counterpart of \eqref{eqn:tanimoto_parity_advancesinappl} which we mention in Corollary \ref{cor:analogue_adv}. 

For a permutation $\pi = \pi_1 \pi_2 \dots \pi_n \in \SSS_n$, we say that $i \in [n]$ is a fixed point of $\pi$ if $\pi_i=i.$  If $\pi$ has no fixed point, then $\pi$ is called a \textit{derangement} and the set of all derangements in $\SSS_n$ is denoted by $\mathcal{D}_n$.  Recall that for a natural number $n$, its $q$-analogue is defined as 
\[ [n]_q=\frac{1-q^n}{1-q}=1+q+q^2+\hdots+q^{n-1}. \]

In \cite{Mantaci-rako-xc-derange}, Mantaci and Rakotondrajao determined the signed excedance enumerator for derangements and obtained the following result.

\begin{theorem}[Mantaci-Rakotondrajao] 
 For $n\geq 2$, let $\DSE_n(t)=\sum_{\pi \in \mathcal{D}_n}(-1)^{\inv(\pi)} t^{\exc(\pi)}$ be the signed excedance enumerator over derangements. Then  $\DSE_n(t)=(-1)^{n-1}t[n-1]_t$.    
\end{theorem}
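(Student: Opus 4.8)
The plan is to realize the signed enumerator as a determinant and evaluate it, following the linear-algebraic strategy that Sivasubramanian used for Theorem~\ref{thm:Mant_signed_exc}. Consider the $n\times n$ matrix $X=(x_{ij})$ defined by
\[
x_{ij}=\begin{cases} t & \text{if } j>i,\\ 1 & \text{if } j<i,\\ 0 & \text{if } j=i.\end{cases}
\]
By the Leibniz formula and the identity $\sgn(\pi)=(-1)^{\inv(\pi)}$,
\[
\det X=\sum_{\pi\in\SSS_n}(-1)^{\inv(\pi)}\prod_{i=1}^{n}x_{i,\pi_i}.
\]
The vanishing diagonal annihilates every permutation with a fixed point, so only derangements survive; and for $\pi\in\mathcal{D}_n$ the product $\prod_i x_{i,\pi_i}$ equals $t^{\exc(\pi)}$, since its $i$-th factor is $t$ precisely when $\pi_i>i$. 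Hence $\det X=\DSE_n(t)$, and the problem reduces to computing $\det X$.

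First I would clear the matrix using the determinant-preserving row operations $R_i\mapsto R_i-R_{i+1}$ for $i=1,\dots,n-1$ (performed in increasing order, so that each uses an as-yet-unmodified row $R_{i+1}$), leaving the last row $(1,\dots,1,0)$ untouched. A direct subtraction shows that each new row $i<n$ has $-1$ in column $i$, $t$ in column $i+1$, and zeros elsewhere. Expanding the resulting matrix along its last column—whose only nonzero entry is the $t$ in position $(n-1,n)$—reduces $\det X$ to $-t\cdot D_{n-1}$, where $D_m$ denotes the determinant of the $m\times m$ matrix whose first $m-1$ rows are bidiagonal with $-1$ on the diagonal and $t$ on the superdiagonal, and whose last row is all ones.

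Next I would evaluate $D_m$ by a one-step recurrence. Expanding $D_m$ along its first column, whose only nonzero entries are the $-1$ in position $(1,1)$ and the $1$ in position $(m,1)$, gives
\[
D_m=-D_{m-1}+(-1)^{m+1}t^{m-1},\qquad D_1=1,
\]
where the last term arises from a minor that, after relabeling columns, is triangular with the single surviving contribution $t^{m-1}$. Solving this recurrence yields $D_m=(-1)^{m-1}[m]_t$. Substituting $m=n-1$ gives
\[
\DSE_n(t)=\det X=-t\,(-1)^{n-2}[n-1]_t=(-1)^{n-1}t\,[n-1]_t,
\]
as claimed.

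The argument is essentially mechanical, so I expect the only real care to be needed in the two cofactor-expansion sign computations—the position of the surviving $t$ after the row operations and the sign of the triangular minor contributing $t^{m-1}$—rather than in any conceptual step. As an alternative one could instead try to exhibit a sign-reversing, $\exc$-preserving involution on $\mathcal{D}_n$, in the style of Mantaci's combinatorial proof over $\SSS_n$, with the surviving fixed points accounting for $(-1)^{n-1}(t+t^2+\cdots+t^{n-1})$; but arranging the cancellation so precisely appears more delicate than the determinant route.
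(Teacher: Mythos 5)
Your proposal is correct and follows essentially the same route as the paper: while the paper only cites this theorem, its proof of the generalization (Theorem \ref{thm:signed_Exc_derange_pap}), specialized to $k=1$, uses exactly your matrix (zero diagonal, $1$ below, $t$ above) and identifies its determinant with $\DSE_n(t)$ in the same way. The only cosmetic difference lies in evaluating that determinant: the paper applies both row and column subtractions to reach a tridiagonal form handled by Lemma \ref{lem:det-tridisgonal}, whereas you use row subtractions alone followed by a direct two-term recurrence; both yield $(-1)^{n-1}t[n-1]_t$.
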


Recently, Alexandersson and Getachew \cite{alexandersson-getachew} proved some multivariate generalizations of a formula for enumerating signed excedances in derangements. Kebede and Rakotondrajao \cite{kebede-rakotondrajao} studied the parity alternating derangements which are the derangements that also are parity alternating permutations starting with odd integers. Motivated by these works, in this article, we study the signed excedance enumeration over {\it mod-k-alternating derangements starting with the remainder $r \pmod k$}. 

Let $\MPD_{n,r}^k$ be the set of mod-k-alternating permutations starting with remainder $r \pmod k$, which are also derangements and $\mpd_{n,r}^k= |\MPD_{n,r}^k|$. Define
% the signed excedance enumerating polynomial over mod-k-alternating derangements as follows: 
\[\SMPDE_{n,r}^k(t)= \sum_{\pi \in \MPD_{n,r}^k } (-1)^{\inv(\pi)}t^{\exc(\pi)}.\] 
We prove the following result about the 
signed excedance enumeration over mod-k-alternating derangements. 
 \begin{theorem}
 \label{thm:signed_Exc_derange_pap}
 Let $n,k$ be positive integers. If $n=mk+j$ where $0 \leq j \leq k-1$, we have 
\begin{align*}
\SMPDE_{n,r}^k(t)  
= & \begin{cases}
(-1)^{n} (-t)^k \big([m]_t \big) ^ j \big([m-1]_t \big)^{k-j}  & \text {if $r=1$},\\
 \SMPE_{n,r}^k(t)  & \text {if $r \geq 2$.}
\end{cases}
\end{align*} 
\end{theorem}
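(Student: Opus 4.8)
The plan is to exploit the product structure of $\MP_{n,1}^k$ as a Young-type subgroup of $\SSS_n$ and to reduce the computation to the known signed excedance enumerator over ordinary derangements. Writing $n = mk + j$ with $0 \le j \le k-1$, split $[n]$ into the $k$ residue classes $R_c = \{i \in [n] : i \equiv c \pmod k\}$; exactly $j$ of these have size $m+1$ and the remaining $k-j$ have size $m$. A permutation $\pi \in \MP_{n,1}^k$ satisfies $\pi_i \equiv i \pmod k$, so $\pi$ preserves each $R_c$ and restricts to a permutation $\sigma^{(c)}$ of $R_c$; relabelling each $R_c$ by $1, 2, \dots, |R_c|$ in increasing order identifies $\MP_{n,1}^k$ with $\prod_{c} \SSS_{|R_c|}$. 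I would first record that both statistics behave well under this identification, then settle $r=1$ by a product formula, and finally dispose of $r \ge 2$ in one line.

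For the decomposition step I would prove two facts. Since $\pi_i$ and $i$ always lie in the same class and the relabelling is order-preserving, $i$ is an excedance of $\pi$ iff the corresponding index is an excedance of $\sigma^{(c)}$, whence $\exc(\pi) = \sum_c \exc(\sigma^{(c)})$. For the sign, $\pi$ is the product of the $\sigma^{(c)}$ viewed as acting on the disjoint supports $R_c$ and as the identity elsewhere; since $\sgn$ is a homomorphism and the sign of a permutation depends only on its cycle type, $(-1)^{\inv(\pi)} = \sgn(\pi) = \prod_c \sgn(\sigma^{(c)}) = \prod_c (-1)^{\inv(\sigma^{(c)})}$, the inversions on the right being computed in the relabelled copies. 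Thus $(-1)^{\inv(\pi)} t^{\exc(\pi)}$ factors as $\prod_c (-1)^{\inv(\sigma^{(c)})} t^{\exc(\sigma^{(c)})}$, and $\pi$ is a derangement iff every $\sigma^{(c)}$ is. Summing over $\MPD_{n,1}^k \cong \prod_c \DD_{|R_c|}$ and interchanging sum and product gives
\[ \SMPDE_{n,1}^k(t) = \prod_{c=0}^{k-1} \DSE_{|R_c|}(t) = \DSE_{m+1}(t)^{\,j}\, \DSE_{m}(t)^{\,k-j}. \]
Substituting the Mantaci--Rakotondrajao formula $\DSE_s(t) = (-1)^{s-1} t\,[s-1]_t$ gives $\DSE_{m+1}(t) = (-1)^m t\,[m]_t$ and $\DSE_m(t) = (-1)^{m-1} t\,[m-1]_t$; the total power of $t$ is $t^{k}$ and the sign exponent is $mj + (m-1)(k-j) = j + (m-1)k = n-k$, so that
\[ \SMPDE_{n,1}^k(t) = (-1)^{\,n-k}\, t^{k}\, [m]_t^{\,j}\, [m-1]_t^{\,k-j} = (-1)^n (-t)^k\, [m]_t^{\,j}\, [m-1]_t^{\,k-j}, \]
which is the claimed $r=1$ formula; the degenerate cases $m \in \{0,1\}$ are consistent under the conventions $[0]_t = 0$ and $\DSE_0(t) = 1$ (no singleton class admits a derangement, while the empty permutation is a vacuous derangement). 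For $r \ge 2$, every $\pi \in \MP_{n,r}^k$ satisfies $\pi_i - i \equiv r-1 \not\equiv 0 \pmod k$, hence $\pi_i \ne i$ for all $i$; therefore $\MPD_{n,r}^k = \MP_{n,r}^k$ and $\SMPDE_{n,r}^k(t) = \SMPE_{n,r}^k(t)$ directly from the definitions, as asserted.

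The sign-and-power bookkeeping is routine; the one point deserving care is the sign factorization $(-1)^{\inv(\pi)} = \prod_c (-1)^{\inv(\sigma^{(c)})}$, which is the real engine of the argument and which I would justify via the homomorphism property of $\sgn$ rather than by tracking cross-class inversions directly. If the determinantal set-up already used to prove Theorem \ref{thm:signed excedance-modk} is phrased so that its matrix is block-diagonal along residue classes, then this factorization is inherited for free and the derangement statement becomes a block-diagonal specialisation; I would check whether that reformulation shortens the derangement argument, but the group-theoretic reduction above, together with the Mantaci--Rakotondrajao theorem, already suffices.
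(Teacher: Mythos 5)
Your proof is correct, but it takes a genuinely different route from the paper. You reduce the $r=1$ case group-theoretically: decompose $\MP_{n,1}^k$ (and hence $\MPD_{n,1}^k$) as a direct product over the $k$ residue classes, check that the order-preserving relabelling makes excedances additive and signs multiplicative (the homomorphism/cycle-type argument you give for the sign is the right one, and it correctly sidesteps any counting of cross-class inversions), and then quote the Mantaci--Rakotondrajao formula $\DSE_s(t)=(-1)^{s-1}t\,[s-1]_t$ as a black box; your sign bookkeeping $mj+(m-1)(k-j)=n-k$ and the identity $(-1)^{n-k}t^k=(-1)^n(-t)^k$ are both right. The paper instead stays inside its determinantal framework: it introduces a zero-diagonal matrix $D_n$ whose determinant equals $\SMPDE_{n,1}^k(t)$, block-diagonalizes it by row/column permutations (implicitly the same residue-class decomposition you use), and then computes the determinant of each zero-diagonal block $A_s$ from scratch via El-Mikkawy's tridiagonal-determinant recurrence (Lemma \ref{lem:det-tridisgonal}), obtaining $\det(A_s)=(-1)^{s-1}t[s-1]_t$ --- in effect re-deriving the Mantaci--Rakotondrajao theorem rather than invoking it. What your route buys: it is shorter, conceptually transparent (the result is visibly the M--R formula raised to powers across residue classes), it treats the $j=0$ case uniformly (the paper's written proof only sets up the block decomposition for $1\le j\le k-1$), and your handling of the degenerate sizes $m\in\{0,1\}$ is a nice touch. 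What the paper's route buys: it is self-contained and consistent with the article's stated theme of proving these enumerations by linear-algebraic means, with no external input beyond a standard tridiagonal lemma. Your treatment of $r\ge 2$ (every element of $\MP_{n,r}^k$ is automatically a derangement since $\pi_i-i\equiv r-1\not\equiv 0 \pmod k$) is identical to the paper's.
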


As an application of the signed excedance enumeration, we also prove that, for suitable choices of $n$ and $k$, the excedance enumerating polynomials over the even and odd mod-k-alternating permutations starting with a fixed remainder $r \pmod k$, are {\it gamma-positive}. To avoid repetition, we will define gamma-positivity in details in Section \ref{sec:gamma-positivity}. Define
\[ \MPE_{n,r}^{k,+}(t)= \sum_{\pi \in \MP_{n,r}^k \cap \AAA_n} t^{\exc(\pi)}, \hspace{3 mm} \mbox{ and } \hspace{3 mm}
\MPE_{n,r}^{k,-}(t)= \sum_{\pi \in \MP_{n,r}^k \cap (\SSS_n \setminus \AAA_n)} t^{\exc(\pi)}. \] In this context, we prove the following result.

\begin{theorem}
\label{thm:gamma-pos-pap-even-odd-n-2mod4}
For positive integers $n\equiv k \pmod {2k}$ with $n \geq 5k$, the polynomials $\MPE_{n,1}^{k,+}(t)$ and $\MPE_{n,1}^{k,-}(t)$  are gamma-positive with center of symmetry  $(n-k)/2.$ Moreover, for $2 \leq r \leq k$, the polynomials $\MPE_{n,r}^{k,+}(t)$ and $\MPE_{n,r}^{k,-}(t)$  are gamma-positive with center of symmetry  $(n+1-r)/2.$ 
\end{theorem}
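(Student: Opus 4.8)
The plan is to reduce the statement to Theorem~\ref{thm:signed excedance-modk} together with the ordinary excedance polynomial $\MPE_{n,r}^k(t)$. Since $\sgn(\pi)=(-1)^{\inv(\pi)}$ is $+1$ on $\AAA_n$ and $-1$ on $\SSS_n\setminus\AAA_n$, splitting the sum defining $\MPE_{n,r}^{k,\pm}$ by sign gives
\[
\MPE_{n,r}^{k,+}(t)=\tfrac12\big(\MPE_{n,r}^k(t)+\SMPE_{n,r}^k(t)\big),\qquad
\MPE_{n,r}^{k,-}(t)=\tfrac12\big(\MPE_{n,r}^k(t)-\SMPE_{n,r}^k(t)\big).
\]
The hypothesis $n\equiv k\pmod{2k}$ forces $n=mk$ with $m$ odd, and $n\ge 5k$ means $m\ge 5$; in this regime Theorem~\ref{thm:signed excedance-modk} gives $\SMPE_{n,r}^k(t)$ explicitly as a signed multiple of $t^{\,k-r+1}(1-t)^{n-k}$. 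So the task becomes to compute the unsigned $\MPE_{n,r}^k(t)$ and then to show that adding or subtracting the known signed polynomial preserves $\gamma$-positivity.

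First I would determine $\MPE_{n,r}^k(t)$ from the product structure of $\MP_{n,r}^k$. Grouping the indices by residue class modulo $k$, each of the $k$ classes has size $m$ and is mapped bijectively onto a value-class; thus an element of $\MP_{n,r}^k$ is a $k$-tuple of permutations in $\SSS_m$, and the excedances split over the blocks. Rescaling the arithmetic progression inside a block, an excedance in a block whose value-class is shifted upward is exactly a \emph{weak} excedance of the associated $\sigma\in\SSS_m$, whereas an excedance in a ``wrapping'' block is a \emph{strict} excedance. Using $\sum_{\sigma\in\SSS_m}t^{\exc(\sigma)}=A_m(t)$ and $\sum_{\sigma\in\SSS_m}t^{\wkexc(\sigma)}=t\,A_m(t)$, and counting $k-r+1$ non-wrapping and $r-1$ wrapping blocks, gives
\[
\MPE_{n,1}^k(t)=\big(A_m(t)\big)^k,\qquad
\MPE_{n,r}^k(t)=t^{\,k-r+1}\big(A_m(t)\big)^k\quad(2\le r\le k).
\]
As $A_m(t)$ is palindromic and $\gamma$-positive (Foata--Sch\"utzenberger), and $\gamma$-positivity is preserved under products and under multiplication by $t^{c}$, the unsigned polynomial is $\gamma$-positive; its center of symmetry, shared by both pieces $\MPE_{n,r}^{k,\pm}$, is read off from the span of this product form.

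The core of the proof is to run the combination $\tfrac12(\MPE_{n,r}^k\pm\SMPE_{n,r}^k)$ through the $\gamma$-expansion. The key point is that the map $f\mapsto\Gamma_f$, sending a palindromic polynomial $f(t)=\sum_j\gamma_j\,t^j(1+t)^{\deg f-2j}$ to $\Gamma_f(x)=\sum_j\gamma_j x^j$, is \emph{multiplicative}: $\Gamma_{fg}=\Gamma_f\,\Gamma_g$. Hence $\Gamma_{(A_m)^k}=\big(\Gamma_{A_m}\big)^k$, while $(1-t)^2=(1+t)^2-4t$ together with the evenness of $n-k=(m-1)k$ yields $\Gamma_{(1-t)^{n-k}}(x)=(1-4x)^{(n-k)/2}$. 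Multiplication by $t^{\,k-r+1}$ and by the sign from Theorem~\ref{thm:signed excedance-modk} only shift and rescale the $\gamma$-vector, so $\gamma$-positivity of \emph{both} $\MPE_{n,r}^{k,+}$ and $\MPE_{n,r}^{k,-}$ is equivalent to the coefficientwise domination (written $\succeq$)
\[
\big(\Gamma_{A_m}(x)\big)^k\ \succeq\ (1+4x)^{(n-k)/2}.
\]
Since both sides are $k$-th powers of polynomials with nonnegative coefficients and convolution is monotone on nonnegative sequences, this reduces to the single-factor inequality $\Gamma_{A_m}(x)\succeq(1+4x)^{(m-1)/2}$.

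The main obstacle is this last inequality for the Eulerian $\gamma$-coefficients, namely
\[
\gamma_{m,j}\ \ge\ \binom{(m-1)/2}{j}\,4^{\,j}\qquad\Big(0\le j\le\tfrac{m-1}{2}\Big),
\]
for odd $m\ge 5$. I would prove it by induction on $m$ in steps of two: the base case $m=5$ is $\Gamma_{A_5}(x)=1+22x+16x^2\succeq 1+8x+16x^2=(1+4x)^2$, checked directly, and a known recurrence for $\gamma_{m,j}$ reduces the inductive step to $\gamma_{m+2,j}\ge\gamma_{m,j}+4\gamma_{m,j-1}$, which then gives $\Gamma_{A_{m+2}}\succeq(1+4x)\,\Gamma_{A_m}$. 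The threshold $n\ge 5k$ is essentially forced: at $m=3$ one has $\gamma_{3,1}=2<4=\binom{1}{1}4$, and passing to $k$-th powers cannot cure this, so $\gamma$-positivity really does fail for $n=3k$. Finally the cases $r=1$ and $2\le r\le k$ are uniform, the latter differing only by the benign prefactor $t^{\,k-r+1}$.
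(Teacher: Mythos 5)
Your proposal is correct, and it opens exactly as the paper does: split $\MPE_{n,r}^{k,\pm}(t)=\tfrac{1}{2}\bigl(\MPE_{n,r}^{k}(t)\pm\SMPE_{n,r}^{k}(t)\bigr)$ and feed in Theorem \ref{thm:signed excedance-modk} together with $\MPE_{n,r}^{k}(t)=t^{k+1-r}A_m(t)^k$ (the paper gets this unsigned formula from permanents of the matrices $M_{n,r}$, i.e.\ Theorem \ref{thm:unsigned excedance-modk}, whereas you re-derive it by a block decomposition into weak/strict excedances; both are fine). The core positivity argument, however, is genuinely different. The paper quotes the Dey--Sivasubramanian result (Lemma \ref{lem:gamma-pos-excedance}), i.e.\ that $A_{2m+1}(t)\pm(1-t)^{2m}$ are gamma-positive with center $m$, and then applies Lemma \ref{lem:fgtofsqrgsqr}, proved via the identity $f^{k+1}\pm g^{k+1}=\tfrac{1}{2}\bigl[(f+g)(f^{k}\pm g^{k})+(f-g)(f^{k}\mp g^{k})\bigr]$ and closure of gamma-positivity under products, to pass to $f^{k}\pm g^{k}$. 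You instead work with gamma-vectors: multiplicativity of $f\mapsto\Gamma_f$, the computation $\Gamma_{(1-t)^{2mk}}(x)=(1-4x)^{mk}$, and monotonicity of convolution on nonnegative sequences reduce everything to the single-factor domination $\Gamma_{A_{2m+1}}(x)\succeq(1+4x)^{m}$, i.e.\ $\gamma_{2m+1,j}\ge\binom{m}{j}4^{j}$, which you prove by induction in steps of two on the Eulerian index; your inductive step $\gamma_{N+2,j}\ge\gamma_{N,j}+4\gamma_{N,j-1}$ does follow from two applications of the recurrence $\gamma_{N+1,j}=(j+1)\gamma_{N,j}+(2N-4j+2)\gamma_{N,j-1}$, whose coefficients are large enough throughout the relevant range, so the route is sound. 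In effect you re-prove the paper's external input (your domination is precisely Lemma \ref{lem:gamma-pos-excedance} in gamma-vector form) rather than cite it, and you replace Lemma \ref{lem:fgtofsqrgsqr} by coefficientwise domination of $k$-th powers. Your route buys a self-contained, quantitative argument that also explains the threshold $n\ge 5k$ (at Eulerian index $3$ one has $\gamma_{3,1}=2<4$, and taking $k$-th powers cannot repair the deficit, so gamma-positivity genuinely fails at $n=3k$); the paper's route buys brevity and a reusable general principle. One shared blemish: for $2\le r\le k$ the prefactor $t^{k+1-r}$ shifts the center of symmetry by $k+1-r$, so the product form has center $(n-k)/2+(k+1-r)=(n+k+2-2r)/2$, not the $(n+1-r)/2$ asserted in the statement; your ``benign prefactor'' remark glosses over this exactly as the paper's ``Similarly'' does, so both proofs establish gamma-positivity while the stated center for $r\ge 2$ is off.
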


The article is organized as follows. In Section $2$, we give the proof of Theorem \ref{thm:signed excedance-modk} by constructing suitable matrices and connecting their determinants with the signed excedance enumeration of mod-k-alternating permutations. We also establish a relationship between the un-signed excedance enumerating polynomials and the Eulerian numbers. We discuss the proof of Theorem \ref{thm:signed_Exc_derange_pap} and Theorem \ref{thm:gamma-pos-pap-even-odd-n-2mod4} in Section $3$ and $4$, respectively. 

%In section $4$, we consider the gamma-positivity of even and odd mod-k-alternating permutations starting with a fixed remainder $r \pmod k$ and give the proof of Theorem \ref{thm:gamma-pos-pap-even-odd-n-2mod4} 

% Alexandersson, Fufa, Getachew, and Qiu generalized 
% \cite{alexandersson-rebede}
%  the notion of parity alternating permutations to {\it mod-k-alternating-permutations}. For any positive integer $k$, a mod-k-alternating permutation of size $n$ is a permutation $\pi \in \SSS_n$ such that $\pi_i \equiv i \pmod k$ for all $i=1,2,\dots,n.$ They enumerated the mod-k-alternating permutations under pattern-avoidance,
% for each of the patterns in {132, 213, 231, 312}. 

%%%%%%%%%%%%%%%%%%%%%%%%%%%%%%%%%%%%%%%%%%%%%%%%%%%%%%%%%%%%%%%%

\section{Excedances and permanents, signed excedances and determinants}

The main goal of this section is to prove Theorem \ref{thm:signed excedance-modk}. This will be achieved by constructing appropriate matrices and connecting their determinants with the signed excedance enumerating polynomials over $\MP_{n,r}^k$. As a by-product of our method, we will also obtain  an explicit expression for the un-signed excedance enumerating polynomials. For that, first let us recall the definition of the {\it permanent} of a matrix.

\begin{definition}
For a square matrix $M$ of size $n \times n$, the permanent of $M$,  denoted by $\Perm(M)$, is defined as 
\[ \Perm(M)  =  \displaystyle  \sum _{\pi \in \SSS_n}  \prod _{i=1}^n m_{i,\pi_i}. \]    
\end{definition}

Now, we state an important lemma which will be used in the subsequent proofs. 

\begin{lemma}[{\cite[Lemma 9]{alexandersson-rebede}}]
\label{lem:cruicial} 
We have $\mpa_{n,r}^k=0$ unless $r=1$ or $n=mk$ for some positive integer $k$.
\end{lemma}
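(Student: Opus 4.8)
The plan is to determine exactly when $\MP_{n,r}^k$ can be nonempty by analysing the residue classes modulo $k$ that any such permutation must respect. Write $n = mk + j$ with $0 \le j \le k-1$. For each residue $c \in \{0,1,\dots,k-1\}$, let $N_c$ be the number of indices $i \in [n]$ with $i \equiv c \pmod k$; since the index set and the value set are both $[n]$, the same $N_c$ also counts the values in $[n]$ lying in residue class $c$. A direct count gives $N_c = m+1$ for $c \in \{1,\dots,j\}$ and $N_c = m$ for all other $c$ (so when $j=0$ every class has size $m$).

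Next I would isolate the combinatorial obstruction. Any $\pi \in \MP_{n,r}^k$ satisfies $\pi_i \equiv i + (r-1) \pmod k$, so it must send every index in class $c$ to a value in class $c + (r-1) \pmod k$. Because $\pi$ is a bijection of $[n]$, its restriction to each index class must be a bijection onto the corresponding value class, which forces $N_c = N_{c+(r-1)}$ for every $c$; conversely, whenever all these equalities hold one can build such a $\pi$ by picking arbitrary bijections class by class. Hence $\mpa_{n,r}^k \neq 0$ if and only if the size vector $(N_c)_{c \in \mathbb{Z}/k}$ is invariant under the cyclic shift by $r-1$.

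The crux is then to show this shift-invariance fails exactly in the excluded regime $r \neq 1$ and $k \nmid n$. If $k \mid n$ (i.e.\ $j=0$) all $N_c$ coincide, so invariance is automatic; and if $r=1$ the shift is trivial, so there is nothing to check. In the remaining case $1 \le j \le k-1$ and $2 \le r \le k$, the set $S = \{c : N_c = m+1\} = \{1,2,\dots,j\}$ is a nonempty proper ``interval'' of consecutive residues, and I claim such an interval admits no nontrivial translation symmetry. The clean way to see this is to locate the unique \emph{descent} of its indicator function: $j$ is the only residue with $j \in S$ and $j+1 \notin S$. If $S + (r-1) = S$, then $j+(r-1)$ would also be such a descent, forcing $j+(r-1) \equiv j$, i.e.\ $r \equiv 1 \pmod k$, contradicting $2 \le r \le k$. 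Thus $(N_c)$ is not shift-invariant and $\mpa_{n,r}^k = 0$, which is precisely the asserted statement.

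I expect the main obstacle to be this last step — ruling out translation symmetry of the block $\{1,\dots,j\}$ — since one should resist a messy divisibility/$\gcd$ computation and argue structurally instead (via the unique-descent observation above, or equivalently by noting that a subset of $\mathbb{Z}/k$ with nontrivial cyclic symmetry must be a union of cosets of a proper nontrivial subgroup and therefore cannot be a single proper nonempty interval). Everything else is routine bookkeeping of the residue-class sizes.
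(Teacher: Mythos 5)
Your proof is correct, but there is nothing in the paper to compare it against: the paper does not prove this lemma at all, it imports it verbatim as \cite[Lemma 9]{alexandersson-rebede}. Judged on its own, your argument is a complete and self-contained proof, and it is the natural one. The reduction is sound: since $\pi_i \equiv i + (r-1) \pmod k$ forces each index class $c$ to map into value class $c+(r-1)$, bijectivity of $\pi$ forces $N_c \le N_{c+(r-1)}$ for every $c$, and summing over $c$ upgrades these inequalities to equalities, so nonemptiness of $\MP_{n,r}^k$ is equivalent to shift-invariance of the size vector $(N_c)_{c \in \mathbb{Z}/k}$ under translation by $r-1$. Your unique-descent argument then correctly rules out any nontrivial translation symmetry of the interval $S=\{1,\dots,j\}$ when $1 \le j \le k-1$: a shift by $r-1$ would carry the unique descent $j$ of the indicator of $S$ to another descent, forcing $k \mid (r-1)$, impossible for $2 \le r \le k$. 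Two small remarks. First, your equivalence actually proves more than the lemma asserts, namely the converse direction ($\mpa_{n,r}^k \neq 0$ when $r=1$ or $k \mid n$), which is consistent with the product formula for $\mpa_n^k$ given in the paper's introduction. Second, the lemma as printed contains a typo --- ``$n=mk$ for some positive integer $k$'' should read ``for some positive integer $m$'' --- and your proof correctly reads the condition as $k \mid n$.
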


In the following two propositions, we establish a relation among 
%The next two propositions connect
the permanents and the determinants of suitably constructed matrices with the un-signed and signed excedance enumerating polynomials over $\MP_{n,r}^k$, respectively. 

\begin{proposition}
\label{prop:mn1}
Let $n$ be a positive integer.  Consider the $n \times n$ matrix $M_{n,1}= (m^1_{i,j})$ defined by
\begin{eqnarray*}
m^1_{i,j} & = & \begin{cases}
0 & \text {if $|i-j|$ is not divisible by $k$}, \\
1 & \text {if $|i-j|$ is divisible by $k$ and $i \geq j$},\\
 t & \text {if $|i-j|$ is divisible by $k$ and $i < j$}. 
 \end{cases}
\end{eqnarray*} 
Then, we have
\[\Perm(M_{n,1})=\MPE_{n,1}^k(t), \hspace{3 mm}  
\mbox{ and } 
\hspace{3 mm}
\det(M_{n,1})=\SMPE_{n,1}^k(t).\]
\end{proposition}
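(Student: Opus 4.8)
The plan is to expand both $\Perm(M_{n,1})$ and $\det(M_{n,1})$ directly from their definitions as sums over $\SSS_n$ and observe that the support pattern of $M_{n,1}$ isolates exactly the permutations in $\MP_{n,1}^k$. First I would note that the term $\prod_{i=1}^n m^1_{i,\pi_i}$ vanishes as soon as a single factor $m^1_{i,\pi_i}$ equals $0$, which by construction happens precisely when $|i-\pi_i|$ is not divisible by $k$. Hence the only permutations contributing a nonzero term are those satisfying $\pi_i \equiv i \pmod k$ for every $i$, and this is exactly the defining condition of $\MP_{n,1}^k$ (the case $r=1$, where $\pi_i - i \equiv 0 \pmod k$).

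Next, for a fixed surviving $\pi \in \MP_{n,1}^k$, I would evaluate the product. By the definition of $M_{n,1}$, the factor $m^1_{i,\pi_i}$ equals $t$ precisely when $i < \pi_i$ and equals $1$ when $i \geq \pi_i$. Since $i$ is an excedance of $\pi$ exactly when $\pi_i > i$, the number of factors equal to $t$ is $\exc(\pi)$, so $\prod_{i=1}^n m^1_{i,\pi_i} = t^{\exc(\pi)}$. Summing over all contributing permutations then gives $\Perm(M_{n,1}) = \sum_{\pi \in \MP_{n,1}^k} t^{\exc(\pi)} = \MPE_{n,1}^k(t)$, as claimed.

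For the determinant, the expansion is the same except that each term carries the extra sign $\sgn(\pi) = (-1)^{\inv(\pi)}$. The identical support argument forces the only nonzero contributions to come from $\pi \in \MP_{n,1}^k$, and each such permutation now contributes $(-1)^{\inv(\pi)} t^{\exc(\pi)}$, so $\det(M_{n,1}) = \sum_{\pi \in \MP_{n,1}^k} (-1)^{\inv(\pi)} t^{\exc(\pi)} = \SMPE_{n,1}^k(t)$. Conceptually this proposition is purely a matter of matching definitions, so I do not anticipate a substantive obstacle; the only points demanding care are verifying that the condition ``$|i-j|$ divisible by $k$'' coincides exactly with the $r=1$ mod-$k$-alternating condition $\pi_i \equiv i \pmod k$, and confirming that the entry value $t$ is aligned with genuine excedance positions $\pi_i > i$ rather than with weak excedances or with descents.
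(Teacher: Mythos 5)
Your proposal is correct and follows essentially the same route as the paper: expand the permanent and determinant as sums over $\SSS_n$, observe that the zero pattern of $M_{n,1}$ kills every term coming from a permutation outside $\MP_{n,1}^k$, and then check that each surviving permutation $\pi$ contributes exactly $t^{\exc(\pi)}$ (with the sign $(-1)^{\inv(\pi)}$ in the determinant case). The two points you flag for care --- that $|i-\pi_i|$ divisible by $k$ is equivalent to $\pi_i \equiv i \pmod k$, and that the entry $t$ corresponds to strict excedances $\pi_i > i$ --- are precisely the definitional matches the paper's proof relies on.
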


\begin{proof} 
By the definition, it follows that 
\begin{eqnarray*}
  \Perm(M_{n,1}) & = & \displaystyle  \sum _{\pi \in \SSS_n}  \prod _{i=1}^n m^1_{i,\pi_i}  =  \displaystyle  \sum _{\pi \in \MP_{n,1}^k}  \prod _{i=1}^n m^1_{i,\pi_i} + \displaystyle  \sum _{\pi \in (\SSS_n - \MP_{n,1}^k)}  \prod _{i=1}^n m^1_{i,\pi_i}, 
  \end{eqnarray*}
  and
  \begin{eqnarray*}
  \det(M_{n,1}) & = & \displaystyle  \sum _{\pi \in \SSS_n} (-1)^{\inv(\pi)} \prod _{i=1}^n m^1_{i,\pi_i} \\
  & = & \displaystyle  \sum _{\pi \in \MP_{n,1}^k} (-1)^{\inv(\pi)} \prod _{i=1}^n m^1_{i,\pi_i} + \displaystyle  \sum _{\pi \in (\SSS_n - \MP_{n,1}^k)} (-1)^{\inv(\pi)} \prod _{i=1}^n m^1_{i,\pi_i}. 
  \end{eqnarray*}
For a permutation $\pi \in (\SSS_n- \MP_{n,1}^k)$, it is necessary that $|\pi_i-i|$ is not divisible by $k$ for some $1 \leq i \leq n$ and hence $m^1_{i,\pi_i}=0$. Thus, we have 
\[ \Perm(M_{n,1}) =  \displaystyle  \sum _{\pi \in \MP_{n,1}^k}  \prod _{i=1}^n m^1_{i,\pi_i} \hspace{3 mm} \mbox{ and } \hspace{3 mm} \det(M_{n,1}) =  \displaystyle  \sum _{\pi \in \MP_{n,1}^k} (-1)^{\inv(\pi)} \prod _{i=1}^n m^1_{i,\pi_i}.\] 
Now, for $\pi \in \MP_{n,1}^k$, let $T_{\pi} = \prod _{i=1}^n m^1_{i, \pi_i}$ be the term occuring in the permanent (respectively, determinant) expansion corresponding to $\pi$. Since $m^1_{i,j}=t$ if $i < j$ and $m^1_{i,j}=1$ otherwise, we have $T_{\pi} = t^{\exc(\pi) }$. Hence, 

\[ \Perm(M_{n,1})= \displaystyle  \sum _{\pi \in \MP_{n,1}^k}  \prod _{i=1}^n t^{\exc(\pi)} = \MPE_{n,1}^k(t),\]
and 
\[ \det(M_{n,1}) =  \displaystyle  \sum _{\pi \in \MP_{n,1}^k} (-1)^{\inv(\pi)} \prod _{i=1}^n t^{\exc(\pi)} = \SMPE_{n,1}^k(t) .\]  
\end{proof}

\begin{proposition}
\label{prop:gen-mnr}
Let $n$ be a positive integer divisible by $k$.  Consider the $n \times n$ matrix $M_{n, r}= (m^r_{i, j})$ defined by
\begin{eqnarray*}
m_{i,j}^r & = & \begin{cases}
0 & \text {if \quad $j-i   \not \equiv r-1 \pmod k $}, \\
1 & \text {if \quad $j-i  \equiv r-1 \pmod k$ and $i \geq j$},\\
 t & \text {if \quad $j-i  \equiv r-1 \pmod k$ and $i < j$}. 
 \end{cases}
\end{eqnarray*} 
Then, we have
\[\Perm(M_{n,r})=\MPE_{n,r}^k(t), \hspace{3 mm}  
\mbox{ and } 
\hspace{3 mm}
\det(M_{n,r})=\SMPE_{n,r}^k(t).\]
\end{proposition}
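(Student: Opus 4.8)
The plan is to mirror the proof of Proposition \ref{prop:mn1} almost verbatim, replacing the divisibility condition ``$|i-j|$ divisible by $k$'' by the asymmetric congruence ``$j-i \equiv r-1 \pmod k$'' that defines $\MP_{n,r}^k$. First I would expand both quantities over the full symmetric group, writing $\Perm(M_{n,r}) = \sum_{\pi \in \SSS_n} \prod_{i=1}^n m^r_{i,\pi_i}$ and $\det(M_{n,r}) = \sum_{\pi \in \SSS_n} (-1)^{\inv(\pi)} \prod_{i=1}^n m^r_{i,\pi_i}$, and then split each sum into the contribution from $\pi \in \MP_{n,r}^k$ and the contribution from $\pi \in \SSS_n \setminus \MP_{n,r}^k$.

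The key observation is that the nonzero entries of $M_{n,r}$ sit exactly in the positions $(i,j)$ with $j-i \equiv r-1 \pmod k$, which is precisely the defining relation of $\MP_{n,r}^k$. Consequently, if $\pi \notin \MP_{n,r}^k$, then $\pi_i - i \not\equiv r-1 \pmod k$ for at least one index $i$, so the factor $m^r_{i,\pi_i}$ vanishes and the entire product (and hence the entire term, signed or unsigned) is zero. This collapses both the permanent and the determinant to sums over $\MP_{n,r}^k$ alone.

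For each surviving $\pi \in \MP_{n,r}^k$, every factor $m^r_{i,\pi_i}$ is nonzero, and by the defining case split it equals $t$ when $i < \pi_i$ and $1$ when $i \geq \pi_i$. Since $i < \pi_i$ is exactly the condition that $i$ is an excedance of $\pi$, the product $T_\pi = \prod_{i=1}^n m^r_{i,\pi_i}$ reduces to $t^{\exc(\pi)}$. Summing then yields $\Perm(M_{n,r}) = \sum_{\pi \in \MP_{n,r}^k} t^{\exc(\pi)} = \MPE_{n,r}^k(t)$ and $\det(M_{n,r}) = \sum_{\pi \in \MP_{n,r}^k} (-1)^{\inv(\pi)} t^{\exc(\pi)} = \SMPE_{n,r}^k(t)$, as desired.

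I do not expect a genuine obstacle, since this is a direct generalization of the $r=1$ case already established. The only points requiring a moment of care are confirming that the asymmetric matrix congruence $j-i \equiv r-1 \pmod k$ matches the defining condition $\pi_i - i \equiv r-1 \pmod k$ of $\MP_{n,r}^k$ (rather than its symmetric cousin used for $r=1$), and noting that the hypothesis $k \mid n$ together with Lemma \ref{lem:cruicial} ensures $\MP_{n,r}^k$ is nonempty, so the identity is not vacuous.
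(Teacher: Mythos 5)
Your proposal is correct and follows essentially the same route as the paper's own proof: expand $\Perm$ and $\det$ over $\SSS_n$, observe that every $\pi \notin \MP_{n,r}^k$ contributes a zero factor $m^r_{i,\pi_i}$, and identify each surviving term $T_\pi$ with $t^{\exc(\pi)}$. Your closing remarks on the asymmetric congruence and on nonemptiness via Lemma \ref{lem:cruicial} are fine but not needed (the identity would hold trivially even if $\MP_{n,r}^k$ were empty).
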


\begin{proof}
The proof follows via the similar steps as the proof of Proposition \ref{prop:mn1}. Yet, we give it for the sake of completeness. Note that
  \begin{eqnarray*}
  \Perm(M_{n,r}) & = & \displaystyle  \sum _{\pi \in \SSS_n}  \prod _{i=1}^n m^r_{i,\pi_i} = \displaystyle  \sum _{\pi \in \MP_{n,r}^k}  \prod _{i=1}^n m^r_{i,\pi_i} + \displaystyle  \sum _{\pi \in (\SSS_n - \MP_{n,r}^k)}  \prod _{i=1}^n m^r_{i,\pi_i},
  \end{eqnarray*}
  and
  \begin{eqnarray*}
  \det(M_{n,r}) & = & \displaystyle  \sum _{\pi \in \SSS_n} (-1)^{\inv(\pi)} \prod _{i=1}^n m^r_{i,\pi_i} \\
  & = & \displaystyle  \sum _{\pi \in \MP_{n,r}^k} (-1)^{\inv(\pi)} \prod _{i=1}^n m^r_{i,\pi_i} + \displaystyle  \sum _{\pi \in (\SSS_n - \MP_{n,r}^k)} (-1)^{\inv(\pi)} \prod _{i=1}^n m^r_{i,\pi_i}. 
  \end{eqnarray*}
 Observe that for a permutation $\pi \in (\SSS_n- \MP_{n,r}^k)$, it is necessary that $\pi_i-i \not \equiv r-1 \pmod k$ for some $1 \leq i \leq n$, and hence $m^r_{i,\pi_i}=0$. Thus, we have
\[ \Perm(M_{n,r}) =  \displaystyle  \sum _{\pi \in \MP_{n,r}^k}  \prod _{i=1}^n m^r_{i,\pi_i} \hspace{3 mm} \mbox{ and } \hspace{3 mm} \det(M_{n,r}) =  \displaystyle  \sum _{\pi \in \MP_{n,r}^k} (-1)^{\inv(\pi)} \prod _{i=1}^n m^r_{i,\pi_i}.\]
Now, for $\pi \in \MP_{n,r}^k$, let $T_{\pi} = \prod _{i=1}^n m^r_{i, \pi_i}$ be the term occuring in the permanent (respectively, determinant) expansion corresponding to $\pi$.
%For $\pi \in \MP_{n,r}^k$, by definition, we have $ \pi_i-i \equiv r-1 \pmod k$. Therefore, 
For $\pi \in \MP_{n,r}^k$, we have $m^r_{i,j}=t$ if $i < j$ and $m^r_{i,j}=1$ if $i > j$. Hence, $T_{\pi} = t^{\exc(\pi) }$. Thus, 
\[ \Perm(M_{n,r})= \displaystyle  \sum _{\pi \in \MP_{n,r}^k}  \prod _{i=1}^n t^{\exc(\pi)} = \MPE_{n,r}^k(t),\]
and 
\[ \det(M_{n,r}) =  \displaystyle  \sum _{\pi \in \MP_{n,r}^k} (-1)^{\inv(\pi)} \prod _{i=1}^n t^{\exc(\pi)} = \SMPE_{n,r}^k(t) .\] 
This completes the proof. 
\end{proof}

Recall that the \textit{Kronecker product} of two matrices $A=(a_{ij})_{m\times n}$ and $B=(b_{ij})_{p\times q}$, denoted by $A\otimes B$, is defined to be the $mp\times nq$ block matrix $[a_{ij}B]$. If $A$ and $B$ are $n \times n$ and $p\times p$ matrices, respectively, then $\det(A\otimes B)=(\det A)^p(\det B)^n$. We are now in a position to prove the first of our main results.

\begin{proof}
[\textbf{Proof of Theorem \ref{thm:signed excedance-modk}}]
We divide the proof into the following two cases:\\
\textbf{Case 1.} Let $n$ be not divisible by $k$. By Lemma \ref{lem:cruicial}, we have $\mpa_{n,r}^k=0$ for $r \geq 2$. Hence, $\SMPE_n^k(t)= \SMPE_{n,1}^k(t)$. Now, consider the  $n \times n$ matrix $M_{n,1}= (m^1_{i,j})$ defined in Proposition \ref{prop:mn1}, for which we have $\SMPE_{n,1}^k(t)=\det(M_{n,1})$. So, it is enough to compute the determinant of $M_{n,1}.$ By performing suitable row and column operations, we transform $M_{n,1}$ into a block diagonal matrix and compute its determinant as follows: 

Let $n=mk+j$, where $1 \leq j \leq k-1.$ We can write any positive integer $q \leq n$ as $q=ak+b$, where $0 \leq a \leq m$ and $ 0 \leq b \leq k-1$. If $0 < b \leq j$, we apply the operations $C_{ak+b}\rightarrow C_{a+1+(b-1)(m+1)}$ and $R_{ak+b}\rightarrow R_{a+1+(b-1)(m+1)}$; if $j < b \leq k-1$, we apply the operations $C_{ak+b}\rightarrow C_{j(m+1)+a+1+(b-j-1)m}$ and $R_{ak+b}\rightarrow R_{j(m+1)+a+1+(b-j-1)m}$; and if $b=0$, we apply the operations $C_{ak}\rightarrow C_{n-(m-a)}$ and $R_{ak}\rightarrow R_{n-(m-a)}$. This gives

\begin{align*}
\det(M_{n,1})=\det\left(
 	\begin{array}{cc}
 	A_{m+1}\otimes I_j & \bf{0}\\
     \bf{0} &  {A_{m}}\otimes I_{k-j}  
 	\end{array}
 	\right),~~\text{where}~~
  A_k= \left(
 	\begin{array}{cccc}
 	1 & t &  \cdots & t \\
 	1 & 1 &  \ddots & \vdots \\
 	\vdots & \ddots & \ddots & t \\
 	1 & \cdots & 1 & 1 \\
 	\end{array}
 	\right)_{k\times k}.
\end{align*}
Again, by applying the operations $C_i^\prime=C_i-C_{i+1}$ for $1\leq i\leq k-1$ on $A_k$, we have 
\begin{align}\label{det:A_k}
 \det(A_k)=\det \left(
 	\begin{array}{ccccc}
 	1-t & 0 &  \cdots & 0 & t \\
 	0 & 1-t &  \ddots & \vdots & t \\
 	\vdots & \ddots & \ddots & 0 & \vdots\\
        0 & \cdots & 0 & 1-t & t \\
 	0 & \cdots & 0 &  0 & 1 \\
 	\end{array}
 	\right)=(1-t)^{k-1}.   
\end{align}
Hence,
\begin{align*} 
\det(M_{n,1}) & =(\det(A_{m+1}))^j (\det(A_{m}))^{k-j}=(1-t)^{mj}(1-t)^{(m-1)(k-j)}=(1-t)^{n-k}.
\end{align*}

\noindent \textbf{Case 2.} Let $n$ be divisible by $k$, say $n=mk$. For $r=1$, the proof will be similar to Case 1 and we have $\det(M_{n,1})= (1-t)^{n-k}$. For $2 \leq r \leq k$, consider the $n \times n$ matrix $M_{n, r}= (m^r_{i, j})$ defined in Proposition \ref{prop:gen-mnr}. Again, by Proposition \ref{prop:gen-mnr}, we have $\SMPE_{n,r}^k(t)=\det(M_{n,r})$. So, it is sufficient to compute the determinant of $M_{n,r}$.

Let $n=mk$. We can write any positive integer $q \leq n$ as $q=ak+b$, where $0 \leq a \leq m$ and $ 0 \leq b \leq k-1$. If $b \neq 0$, we apply the operations $C_{ak+b}\rightarrow C_{a+1+m(b-1)}$; $R_{ak+b}\rightarrow R_{a+1+m(b-1)}$ and if $b=0$, we apply the operations $C_{ak}\rightarrow C_{a+m(k-1)}$; $R_{ak}\rightarrow R_{a+m(k-1)}$. This gives
\begin{align*}
\det(M_{n,r})=\det\left(
 	\begin{array}{cc}
 	\bf{0} & B_m \otimes I_{k-r+1}  \\
         A_m \otimes I_{r-1} & \bf{0} 
 	\end{array} 
 	\right),
  \end{align*}
where $A_m$ and $B_m$ are $m \times m$ matrices with
\begin{align*}
   A_m= \left(
 	\begin{array}{cccc}
 	1 & t &  \cdots & t \\
 	1 & 1 &  \ddots & \vdots \\
 	\vdots & \ddots & \ddots & t \\
 	1 & \cdots & 1 & 1 \\
 	\end{array}
 	\right)\text{and}~~ 
  B_m= \left(
 	\begin{array}{cccc}
 	t & t &  \cdots & t \\
 	1 & t &  \ddots & \vdots \\
 	\vdots & \ddots & \ddots & t \\
 	1 & \cdots & 1 & t \\
 	\end{array}
 	\right).
\end{align*}
 From (\ref{det:A_k}), we have $\det(A_m)=(1-t)^{m-1}$. Again, by applying the operations $R_i^\prime=R_i-R_{i+1}$ for $1\leq i\leq m-1$ on $B_m$, we have
$$\det(B_m)=\det \left(
 	\begin{array}{ccccc}
 	t-1 & 0 &   \cdots & 0 & 0 \\
 	0 & t-1 &  \ddots & \vdots & 0 \\
 	\vdots & \ddots & \ddots & 0 & \vdots\\
        0 & \cdots & 0 & t-1 & 0 \\
 	1 & \cdots & 1 & 1 & t \\
 	\end{array}
 	\right)=(-1)^{m-1}t(1-t)^{m-1}.$$ 
  Thus, 
  \begin{align*}
  \det (M_{n,r}) & =(-1)^{m^2(r-1)(k-r+1)}(\det(A_m))^{r-1}(\det(B_m))^{k-r+1} \\
  & =(-1)^{m(r-1)(k-r+1)+(m-1)(k-r+1)}t^{k-r+1} (1-t)^{(r-1)(m-1)+(m-1)(k-r+1)} \\
 & =(-1)^{(mr-1)(k-r+1)} t^{k-r+1} (1-t)^{n-k}.
  \end{align*}
  This completes the proof.
\end{proof}

\begin{remark}
Note that by putting $k=1$ in Theorem \ref{thm:signed excedance-modk}, we get Theorem \ref{thm:Mant_signed_exc} as an immediate corollary. 
\end{remark}

We get the following result as a by-product of the method of the proof of Theorem \ref{thm:signed excedance-modk}. 

\begin{theorem}
\label{thm:unsigned excedance-modk}
Let $n,k$ be positive integers. If $n$ is not divisible by $k$, say $n=mk+j$ with $1 \leq j \leq k-1$, then 
\[\MPE_n^k(t)= \MPE_{n,1}^k(t)= A_{m+1}(t)^{j} A_{m}(t)^{k-j}.  \] 
If $n$ is divisible by $k$, say $n=mk$, then 
\begin{eqnarray*}
\MPE_{n,r}^k(t)  & = & \begin{cases}
A_m(t)^k & \text {if $r=1$}, \\
t^{k+1-r} A_m(t)^k & \text {if $2 \leq r \leq k$}.
 \end{cases}
\end{eqnarray*} 
\end{theorem}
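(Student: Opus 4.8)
The plan is to re-run the argument from the proof of Theorem \ref{thm:signed excedance-modk} verbatim, but with the \emph{permanent} replacing the determinant throughout. By Propositions \ref{prop:mn1} and \ref{prop:gen-mnr} we already have $\Perm(M_{n,1})=\MPE_{n,1}^k(t)$ and $\Perm(M_{n,r})=\MPE_{n,r}^k(t)$, so it suffices to evaluate these permanents. The essential observation is that the \emph{first} batch of operations in the proof of Theorem \ref{thm:signed excedance-modk} — the index reassignments $C_{ak+b}\to C_{\bullet}$ and $R_{ak+b}\to R_{\bullet}$ — are simultaneous relabelings of the rows and columns by a single permutation, i.e.\ conjugations by a permutation matrix, and the permanent is invariant under such relabelings. (By contrast, the subsequent elimination steps $C_i'=C_i-C_{i+1}$ and $R_i'=R_i-R_{i+1}$ used to triangularize $A_k$ and $B_m$ do change the permanent, so they must be dropped.) Hence the same relabelings carry $M_{n,1}$ to the block-diagonal matrix $A_{m+1}\otimes I_j \,\oplus\, A_m\otimes I_{k-j}$ and carry $M_{n,r}$ to the block anti-diagonal matrix $\left(\begin{smallmatrix} 0 & B_m\otimes I_{k-r+1}\\ A_m\otimes I_{r-1} & 0\end{smallmatrix}\right)$, with the permanent unchanged.

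Next I would record two elementary facts about permanents. First, the permanent of a block-diagonal matrix is the product of the permanents of its diagonal blocks, and for square $C,D$ the block anti-diagonal matrix satisfies $\Perm\!\left(\begin{smallmatrix} 0 & C\\ D & 0\end{smallmatrix}\right)=\Perm(C)\,\Perm(D)$; crucially, \emph{no sign} appears here, in contrast with the determinant, since any nonzero term forces the top rows into the right columns and the bottom rows into the left columns. Second, $\Perm(A\otimes I_p)=\Perm(A)^p$: indexing rows and columns of $A\otimes I_p$ by pairs, the entry at $((i,s),(j,s'))$ equals $a_{ij}\,[s=s']$, so any permutation giving a nonzero term must preserve the second coordinate and thus be a $p$-tuple of permutations of the block index set, whence the permanent factors as a $p$-fold product. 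Combining these gives $\Perm(M_{n,1})=\Perm(A_{m+1})^{j}\,\Perm(A_m)^{k-j}$ and $\Perm(M_{n,r})=\Perm(B_m)^{k-r+1}\,\Perm(A_m)^{r-1}$.

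It then remains to evaluate $\Perm(A_m)$ and $\Perm(B_m)$ \emph{directly}. Observe that $A_m$ is exactly the matrix $M_{m,1}$ for $k=1$, and since $\MP_m^1=\SSS_m$, Proposition \ref{prop:mn1} gives $\Perm(A_m)=\MPE_m^1(t)=\sum_{\pi\in\SSS_m}t^{\exc(\pi)}=A_m(t)$, the Eulerian polynomial. For $B_m$, whose $(i,j)$ entry is $t$ when $i\le j$ and $1$ when $i>j$, the term attached to $\pi\in\SSS_m$ is $t^{\wkexc(\pi)}$, where $\wkexc(\pi)=|\{i:\pi_i\ge i\}|$ counts weak excedances; since weak excedances are equidistributed with $\exc+1$ over $\SSS_m$, one obtains $\Perm(B_m)=t\,A_m(t)$. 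Substituting yields $\MPE_{n,1}^k(t)=A_{m+1}(t)^{j}A_m(t)^{k-j}$ in the non-divisible case, and $\MPE_{n,r}^k(t)=t^{k-r+1}A_m(t)^{k}$ for $2\le r\le k$ (with $A_m(t)^k$ for $r=1$) in the divisible case; Lemma \ref{lem:cruicial} then identifies $\MPE_n^k$ with $\MPE_{n,1}^k$ when $k\nmid n$.

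The point requiring the most care is precisely that the permanent is not alternating, so the Gaussian elimination that drives the proof of Theorem \ref{thm:signed excedance-modk} is illegitimate here: I may only use permutation operations to reach the block form, and must then evaluate $\Perm(A_m)$ and $\Perm(B_m)$ combinatorially rather than by triangularization. Of these, the weak-excedance evaluation $\Perm(B_m)=t\,A_m(t)$ is the one genuinely new ingredient.
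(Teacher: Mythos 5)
Your proposal is correct and follows essentially the same route as the paper: the paper likewise reuses the block decomposition from the proof of Theorem \ref{thm:signed excedance-modk} with permanents in place of determinants, obtaining $\Perm(M_{n,1})=\Perm(A_{m+1})^j\,\Perm(A_m)^{k-j}$ and $\Perm(M_{n,r})=\Perm(A_m)^{r-1}\,\Perm(B_m)^{k-r+1}$, together with $\Perm(A_m)=A_m(t)$ and $\Perm(B_m)=t\,A_m(t)$. The only difference is that you spell out details the paper leaves implicit, namely that only the relabeling (not the elimination) operations preserve the permanent, that no sign arises in the block anti-diagonal case, and the weak-excedance argument for $\Perm(B_m)=t\,A_m(t)$.
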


\begin{proof}
First, let us consider the case when $n$ is not divisible by $k$. 
By Lemma \ref{lem:cruicial}, it follows that $\MPE_n^k(t)= \MPE_{n,1}^k(t)$. Now, consider the  $n \times n$ matrix $M_{n,1}= (m^1_{i,j})$ defined in Proposition \ref{prop:mn1}, for which we have $\MPE_{n,1}^k(t)=\Perm(M_{n,1})$. So, it is enough to compute the permanent of $M_{n,1}.$  Following the proof of Theorem \ref{thm:signed excedance-modk}, we have 
\[ \Perm(M_{n,1})=(\Perm(A_{m+1}))^j (\Perm(A_{m}))^{k-j}, \]
where $A_m$ is the same matrix
as defined in the proof of Theorem \ref{thm:signed excedance-modk}. Since $\Perm(A_m)$ is same as the Eulerian polynomial $A_m(t)$, we have 
\[ \MPE_n^k(t)= \MPE_{n,1}^k(t) =  (\Perm(A_{m+1}))^j (\Perm(A_{m}))^{k-j} 
=A_{m+1}(t)^j A_m(t)^{k-j} .\]
 
Next, we consider the case when $n$ is divisible by $k$, say $n=mk$.
%So, when $r=1$, we can set $j=0$ and hence we have
For $r=1$, it is easy to see that 
\[ \MPE_{n,1}^k(t) = A_m(t)^k .\] 
For $2 \leq r \leq k$, from the proof of Theorem \ref{thm:signed excedance-modk} Case 2, we have 
\begin{align*}
  \MPE_{n,r}^k(t)  = \Perm (M_{n,r}) &= (\Perm(A_{m}))^{r-1} (\Perm(B_{m}))^{k-r+1} \\  
& = A_m(t)^{r-1} (tA_m(t))^{k-r+1} \\
& = t^{k-r+1} A_m(t)^k.
\end{align*}
This completes the proof. 
\end{proof}

%%%%%%%%%%%%%%%%%%%%%%%%%%%%%%%%%%%%%%%%%%%%%%%%%%%%%%%%%%%%%%%%%%%%%%%%%%%%%%

\section{Mod-k-alternating derangements} 
\label{sec:mod-k-alt-derang}

 In this section, we study the sign-balance for excedances over mod-k-alternating derangements. 
%We can observe that if $r>1$, any mod-k-alternating permutation is a derangement.
%% Therefore, it is easy to see that 
%%\[\mpd_{n,r}^k=  d_{m+1}^j d_m^{k-j} + \sum_{r=2}^n \displaystyle \mpa_{n,r}^k\] 
%%where $n=mk+j$ with $1 \leq j \leq k-1$.
The following lemma will be useful to get an explicit expression for the polynomial $\SMPDE_{n,r}^k(t)$. 

\begin{lemma}[{\cite[Theorem 2.1]{Mikkawy-det-tridiagonal}}]
\label{lem:det-tridisgonal}
Let
    \begin{align*}
      f_n=\det \left(
 	\begin{array}{cccccc}
 	c_1 & a_1 & 0 & \cdots & \cdots & 0 \\
 	b_2 & c_2 & a_2 & \ddots &   & \vdots \\
 	0 & b_3 & c_3 & \ddots &  0 & \vdots \\
 	\vdots & \ddots & \ddots & \ddots  & \ddots & 0 \\
        \vdots & & 0 & \ddots & \ddots & a_{n-1} \\
        0 & \cdots & \cdots & 0 & b_n & c_n
 	\end{array}
 	\right).  
    \end{align*}
Then $f_n$ satisfies the following recurrence relation:
$$f_n=c_nf_{n-1}-b_na_{n-1}f_{n-2},$$
where the initial values for $f_n$ are $f_0=1$ and $f_{-1}=0$.
\end{lemma}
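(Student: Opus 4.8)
The plan is to establish this classical three-term recurrence by cofactor (Laplace) expansion, carried out twice with careful sign bookkeeping. First I would expand $f_n$ along the last row of the displayed $n \times n$ tridiagonal matrix. Since that row has only the two nonzero entries $b_n$ (in position $(n,n-1)$) and $c_n$ (in position $(n,n)$), the expansion collapses to just two terms:
\[ f_n = (-1)^{2n} c_n \, D_{n,n} + (-1)^{2n-1} b_n \, D_{n,n-1} = c_n D_{n,n} - b_n D_{n,n-1}, \]
where $D_{n,n}$ and $D_{n,n-1}$ denote the minors obtained by deleting row $n$ together with column $n$ and column $n-1$, respectively.

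The second step is to identify these two minors. The minor $D_{n,n}$ is obtained by deleting the last row and last column, which leaves precisely the leading $(n-1) \times (n-1)$ tridiagonal principal submatrix; hence $D_{n,n} = f_{n-1}$ directly. The minor $D_{n,n-1}$ requires a little more care, since it is no longer tridiagonal: deleting row $n$ and column $n-1$ strands the entry $a_{n-1}$ in the corner. I would therefore expand $D_{n,n-1}$ once more, this time along its last column (the image of the original column $n$); in that column the only surviving nonzero entry is $a_{n-1}$, sitting in the bottom-right corner of the $(n-1)\times(n-1)$ minor. Its cofactor sign is $(-1)^{2(n-1)} = 1$, and deleting its row and column leaves the leading $(n-2) \times (n-2)$ tridiagonal block, so $D_{n,n-1} = a_{n-1} f_{n-2}$.

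Combining the two expansions yields $f_n = c_n f_{n-1} - b_n a_{n-1} f_{n-2}$, which is the asserted recurrence. It remains to install the boundary conventions: $f_0 = 1$ is the determinant of the empty $0 \times 0$ matrix, and $f_{-1} = 0$ is a formal convention chosen so that the recurrence also produces the correct value at $n = 1$, namely $f_1 = c_1 f_0 - b_1 a_0 f_{-1} = c_1$ (the spurious symbols $b_1, a_0$ being annihilated by the factor $f_{-1} = 0$); a quick check at $n = 2$, where the recurrence gives $c_1 c_2 - a_1 b_2$, confirms agreement with the direct $2 \times 2$ determinant. The only genuinely delicate point in the whole argument is the sign and minor bookkeeping in the \emph{second} expansion: one must verify that deleting row $n$ and column $n-1$ and then expanding along the remaining last column really does reduce to $a_{n-1}$ times the leading $(n-2) \times (n-2)$ block, and that the accumulated cofactor signs multiply out to the factor $-b_n a_{n-1}$ rather than $+b_n a_{n-1}$.
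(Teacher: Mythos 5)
Your proof is correct: the expansion along the last row gives $f_n = c_n D_{n,n} - b_n D_{n,n-1}$, and your second expansion correctly places $a_{n-1}$ in the bottom-right corner of the minor $D_{n,n-1}$ (with cofactor sign $+1$), yielding $D_{n,n-1}=a_{n-1}f_{n-2}$ and hence the recurrence, with the boundary values $f_0=1$, $f_{-1}=0$ justified exactly as you say. The paper itself gives no proof of this lemma --- it is imported by citation from El-Mikkawy --- and your double Laplace expansion is precisely the standard argument used there, so your write-up supplies the proof the paper omits.
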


\begin{proof}[\textbf{Proof of Theorem \ref{thm:signed_Exc_derange_pap}}]
Note that any mod-k-alternating permutation starting with the remainder $r\pmod k$ is a derangement when $r \geq 2$. So, we only need to consider the case when $r=1$.
Let $D_n= (d_{i,j})$ be an $n \times n$ matrix with
\begin{eqnarray*}
	d_{i,j} & = & \begin{cases}
	0 & \text {if $|i-j|$ is not divisible by $k$ or $i=j$}, \\
	 1 & \text {if $|i-j|$ is divisible by $k$ and $i > j$},\\
	  t & \text {if $|i-j|$ is divisible by $k$ and $i < j$}. 
	 \end{cases}
\end{eqnarray*}   		

Note that
\begin{eqnarray*}
\det(D_n) & = & \displaystyle  \sum _{\pi \in \SSS_n} (-1)^{\inv(\pi)} \prod _{i=1}^n d_{i,\pi_i} \\
  & = & \displaystyle  \sum _{\pi \in \MPD_{n,1}^k } (-1)^{\inv(\pi)} \prod _{i=1}^n d_{i,\pi_i} + \displaystyle  \sum _{\pi \in (\SSS_n - \MPD_{n,1}^k) } (-1)^{\inv(\pi)} \prod _{i=1}^n d_{i,\pi_i}.  
  \end{eqnarray*} 
For a permutation $\pi \not \in \MPD_{n,1}^k$, we must have $|\pi_i-i|$ is not  divisible by $k$ for some $1 \leq i \leq n$, or  $\pi_i =i$ for some $1 \leq i \leq n$. Hence, if  $\pi \not \in \MPD_{n,1}^k$,  we must have $d_{i,\pi_i}=0$. Thus, we have 
\[ \det(D_n) = \sum _{\pi \in \MPD_{n,1}^k } (-1)^{\inv(\pi)} \prod _{i=1}^n d_{i,\pi_i}. \] 
Now, let $T_{\pi} = \prod _{i=1}^n d_{i, \pi_i}$ be the term occurring in the determinant expansion corresponding to $\pi$. From the definition of $d_{i, \pi_i}$, we have $T_{\pi} = t^{\exc(\pi) }$. Hence, $\SMPDE_{n,1}^k(t)=\det(D_n).$ Now, to find the determinant of $D_n$, we transform $D_n$ into a block diagonal matrix by applying suitable row and column operations.

Let $n=mk+j$, where $1 \leq j \leq k-1.$ We can write any positive integer $q \leq n$ as $q=ak+b$ where $0 \leq a \leq m$ and $ 0 \leq b \leq k-1$. If $0 < b \leq j$, we apply the operations $C_{ak+b}\rightarrow C_{a+1+(b-1)(m+1)}$ and $R_{ak+b}\rightarrow R_{a+1+(b-1)(m+1)}$; if $j < b \leq k-1$, we apply the operations $C_{ak+b}\rightarrow C_{j(m+1)+(b-j-1)m+a+1}$ and $R_{ak+b}\rightarrow R_{j(m+1)+(b-j-1)m+a+1}$; and if $b=0$, we apply the operations $C_{ak}\rightarrow C_{n-(m-a)}$ and $R_{ak}\rightarrow R_{n-(m-a)}$. This gives

 \begin{align*}
\det(D_n)= \det\left(
 	\begin{array}{cc}
 	A_{m+1}\otimes I_j & \bf{0}\\
     \bf{0} &  {A_{m}}\otimes I_{k-j}  
 	\end{array}
 	\right),~~\text{where}~~
  A_k= \left(
 	\begin{array}{cccc}
 	0 & t &  \cdots & t \\
 	1 & 0 &  \ddots & \vdots \\
 	\vdots & \ddots & \ddots & t \\
 	1 & \cdots & 1 & 0 \\
 	\end{array}
 	\right)_{k\times k}.
\end{align*}
Again, by applying the operations $C_i^\prime=C_i-C_{i+1}$ and $R_i^\prime=R_i-R_{i+1}$ for $1\leq i\leq k-1$ on $A_k$, we have 
\begin{align*}
 \det(A_k)=\det \left(
 	\begin{array}{cccccc}
 	-1-t & t & 0 &  \cdots & 0 & 0 \\
 	1 & -1-t & t & \ddots & \vdots & \vdots \\
 	0 & 1 & -1-t &  \ddots & 0 & 0 \\
 	\vdots & \ddots & \ddots & \ddots & t & 0\\
        0 & \cdots & 0 & 1 & -1-t & t \\
 	0 & \cdots & 0 & 0 & 1 & 0 \\
 	\end{array}
 	\right)  .
\end{align*}
Now, expanding the determinant with respect to the last row, we have 
\begin{align*}
 \det(A_k)=(-1)^{2k-1}\det \left(
 	\begin{array}{cccccc}
 	-1-t & t & 0 &  \cdots & 0 & 0 \\
 	1 & -1-t & t & \ddots & \vdots & \vdots \\
 	0 & 1 & -1-t &  \ddots & 0 & 0 \\
 	\vdots & \ddots & \ddots & \ddots & t & 0\\
        0 & \cdots & 0 & 1 & -1-t & 0 \\
 	0 & \cdots & 0 & 0 & 1 & t \\
 	\end{array}
 	\right)_{(k-1)\times (k-1)}.   
\end{align*}
Again, expanding with respect to the last column, we have
\begin{align*}
 \det(A_k)=(-1)^{2k-1}t \det \left(
 	\begin{array}{ccccc}
 	-1-t & t & 0 &  \cdots & 0 \\
 	1 & -1-t & t & \ddots & \vdots \\
 	0 & 1 & -1-t &  \ddots & 0\\
 	\vdots & \ddots & \ddots & \ddots & t\\
        0 & \cdots & 0 & 1 & -1-t  \\
 	\end{array}
 	\right)_{(k-2)\times (k-2)}      
\end{align*}
Now, by Lemma \ref{lem:det-tridisgonal}, we have $ \det(A_k)=(-1)^{2k-1}t(-1)^{k-4}(1+t+t^2+\hdots+t^{k-2})=(-1)^{k-1}t[k-1]_t$. 
Thus, 
\begin{align*}
\det(D_n) & =(\det(A_{m+1}))^j  (\det(A_m))^{k-j} \\
& = (-1)^{mj} t^j \big([m]_t \big)^j
(-1)^{(m-1)(k-j)} t^{k-j} \big([m-1]_t\big)^{k-j}\\
& = (-1)^{n} (-t)^k \big([m]_t \big)^ j \big([m-1]_t\big)^{k-j}. 
\end{align*}
This completes the proof for $r=1$ and hence the proof of the theorem.
\end{proof}

%%%%%%%%%%%%%%%%%%%%%%%%%%%%%%%%%%%%%%%%%%%%%%%%%%%%%%%%%%%%%%%%%%%%%%%%%%%

\section{Gamma-positivity of even and odd mod-k-alternating permutations}
\label{sec:gamma-positivity}

Let $f(t)  = \sum_{i=0}^n a_i t^i \in \QQ[t]$ be a  univariate polynomial of degree $n$, where $a_i \in \QQ$ with $a_n \not= 0$.  Let $r$ be the least non-negative integer such that $a_r \not= 0$.  The polynomial $f(t)$ is said to be {\it palindromic} if $a_{r+i} = a_{n-i}$ for $0 \leq i \leq \floor{(n-r)/2}$.  Define the {\it center of symmetry} of $f(t)$ to be $(n+r)/2$.  Note that for a palindromic polynomial $f(t)$, its center of symmetry could be half integral.

Let $\PP_{(n+r)/2, r}(t)$ denote the set of palindromic 
univariate polynomials $f(t)$ with minimum nonzero exponent of $t$ being at least $r$ and having center of symmetry $(n+r)/2$.  Let 
$\Gamma = \{ t^{r+i}(1+t)^{n-r-2i}: 0 \leq i \leq \nmrhalf \}$.
It is easy to see that if $f(t) \in \PP_{(n+r)/2, r}(t)$, then we can write $f(t) = \sum_{i=0}^{\floor{(n-r)/2}} \gamma_{n,i} t^{r+i} (1+t)^{n-r-2i}$. The polynomial $f(t)$ is said to be {\it gamma-positive} if $\gamma_{n,i} \geq 0$ for all $i$.

The study of gamma-positivity is an interesting and important topic in enumerative combinatorics. It appears widely in finite geometries, combinatorics and number theory.  Foata and Sch{\"u}tzenberger \cite{foata-schu} showed gamma-positivity of the Eulerian polynomials $A_n(t)$ that enumerate descents in the symmetric group $\mathfrak{S}_n.$ Subsequently, Foata and Strehl \cite{foata-strehl} 
used a group action based proof which has been rediscovered and termed as ``valley hopping" by Shapiro, Woan, and Getu. More recent interest in gamma-positivity was sparked by Gal \cite{gal} in 2005 when he showed that some questions in topology could be resolved by demonstrating the gamma-positivity of their combinatorial invariants. For more details on the gamma-positivity, one can look at \cite{athanasiadis,han,ma-ma-yeh-descent,Ma-ma-yeh} and the references therein. 

In this section, we are interested in the gamma-positivity of the excedance enumerating polynomials over mod-k-alternating permutations. First, we give some preliminary results which will be used to prove the main result. 

\begin{lemma}[{\cite[Lemma 1]{Dey-Siva-annals-com}}]
\label{lem:gamma-pos-under-prod}
Let $f_1$ and $f_2$ be two gamma-positive polynomials with centers of symmetry $a_1$ and $a_2$, respectively. Then, their product $f_1f_2$ is a gamma-positive polynomial with center of symmetry $a_1+a_2$. 
\end{lemma}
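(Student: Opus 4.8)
The plan is to exploit the multiplicative closure of the gamma basis directly. First I would invoke the definition to write each factor in its gamma expansion: if $f_1$ has degree $n_1$, minimum nonzero exponent $r_1$, and center of symmetry $a_1 = (n_1+r_1)/2$, then gamma-positivity supplies
\[ f_1(t) = \sum_{i} \gamma_i\, t^{r_1+i}(1+t)^{n_1 - r_1 - 2i}, \qquad \gamma_i \geq 0, \]
and likewise $f_2(t) = \sum_{j} \delta_j\, t^{r_2+j}(1+t)^{n_2 - r_2 - 2j}$ with $\delta_j \geq 0$ and $a_2 = (n_2+r_2)/2$.

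The key observation is that the building blocks $t^{s}(1+t)^{w}$ are closed under multiplication with exponents simply adding. Concretely, I would multiply a single pair of basis terms and record
\[ t^{r_1+i}(1+t)^{n_1-r_1-2i} \cdot t^{r_2+j}(1+t)^{n_2-r_2-2j} = t^{(r_1+r_2)+(i+j)}(1+t)^{(n_1+n_2)-(r_1+r_2)-2(i+j)}. \]
Setting $R = r_1+r_2$ and $N = n_1+n_2$, this is exactly the gamma basis element of index $\ell = i+j$ for the space $\PP_{(N+R)/2,\,R}(t)$. Expanding $f_1 f_2 = \sum_{i,j}\gamma_i\delta_j(\cdots)$ and grouping by $\ell$ then yields
\[ f_1(t)f_2(t) = \sum_{\ell} \Big( \sum_{i+j=\ell} \gamma_i \delta_j \Big)\, t^{R+\ell}(1+t)^{N - R - 2\ell}. \]

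From here the conclusion follows on two fronts. First, the center of symmetry of $f_1 f_2$ is $(N+R)/2 = (n_1+r_1)/2 + (n_2+r_2)/2 = a_1 + a_2$, as claimed. Second, each new gamma coefficient $\sum_{i+j=\ell}\gamma_i\delta_j$ is a sum of products of non-negative reals, hence non-negative, so $f_1 f_2$ is gamma-positive.

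The argument is essentially bookkeeping, so I do not anticipate a serious obstacle; the single point deserving care is confirming that $R = r_1+r_2$ and $N = n_1+n_2$ are genuinely the minimum nonzero exponent and the degree of the product, so that the displayed expression is the gamma expansion in the precise sense of the definition. This I would verify by noting that $\gamma_0 = a_{r_1}\neq 0$ and $\delta_0 = a_{r_2}\neq 0$ (since only the index-$0$ terms contribute to the lowest degree of each factor), whence the lowest term $\gamma_0\delta_0\, t^{R}$ of the product survives and, by palindromicity, so does the top term; the convolution index therefore ranges exactly over $0 \le \ell \le \floor{(N-R)/2}$.
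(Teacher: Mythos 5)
Your proof is correct. The paper does not prove this lemma itself---it is quoted as \cite[Lemma 1]{Dey-Siva-annals-com}---and your argument (expand each factor in the gamma basis, multiply basis elements so that the exponents of $t$ and $(1+t)$ simply add, collect the non-negative convolution coefficients, and verify that $r_1+r_2$ and $n_1+n_2$ are genuinely the minimum nonzero exponent and the degree of $f_1f_2$) is the standard proof of this fact and matches the approach of the cited source.
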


\begin{lemma}[{\cite[Theorem 4]{Dey-Siva-annals-com}}]
\label{lem:gamma-pos-excedance}
For odd positive integers $n\geq 5$, the polynomials 
$\sum_{\pi \in \AAA_n} t^{\exc(\pi)}$ and $\sum_{\pi \in (\SSS_n -\AAA_n)} t^{\exc(\pi)}$ are gamma-positive and both of them have center of symmetry $(n-1)/2.$
\end{lemma}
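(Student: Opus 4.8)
The plan is to reduce the statement to a single inequality between the $\gamma$-coefficients of the Eulerian polynomial and an explicit binomial expression, and then to settle that inequality by a two-step induction. Throughout I write $E_n^+(t)=\sum_{\pi\in\AAA_n}t^{\exc(\pi)}$ and $E_n^-(t)=\sum_{\pi\in\SSS_n\setminus\AAA_n}t^{\exc(\pi)}$, and I set $m=(n-1)/2$, which is an integer since $n$ is odd.

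First I would record the two identities $E_n^+(t)+E_n^-(t)=\sum_{\pi\in\SSS_n}t^{\exc(\pi)}=A_n(t)$, the Eulerian polynomial (using that $\exc$ and $\des$ are equidistributed), and $E_n^+(t)-E_n^-(t)=\sum_{\pi\in\SSS_n}(-1)^{\inv(\pi)}t^{\exc(\pi)}=(1-t)^{n-1}$, which is exactly Theorem \ref{thm:Mant_signed_exc}. Hence $E_n^\pm(t)=\tfrac12\big(A_n(t)\pm(1-t)^{n-1}\big)$. Since $n$ is odd, $n-1=2m$ is even, so both $A_n(t)$ and $(1-t)^{2m}$ are palindromic of degree $2m$ with center of symmetry $m=(n-1)/2$; the same then holds for $E_n^\pm(t)$, which already gives the claimed centers (the constant term of $E_n^-$ vanishes, which merely shifts its support to $[1,n-2]$ without changing the center).

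The heart of the argument is to compare $\gamma$-expansions in the common basis $\{t^i(1+t)^{2m-2i}:0\le i\le m\}$, which spans all palindromes centered at $m$ with support in $\{0,\dots,2m\}$. By Foata--Sch\"{u}tzenberger \cite{foata-schu}, $A_n(t)=\sum_{i=0}^m\gamma_{n,i}\,t^i(1+t)^{2m-2i}$ with all $\gamma_{n,i}\ge0$. For the signed part, the substitution $s=t+t^{-1}$ (under which $t^{-m}\,t^i(1+t)^{2m-2i}=(s+2)^{m-i}$ and $t^{-m}(1-t)^{2m}=(s-2)^m$) turns the desired expansion into the binomial identity $(s-2)^m=\sum_i\binom{m}{i}(-4)^i(s+2)^{m-i}$, giving $(1-t)^{2m}=\sum_{i=0}^m\binom{m}{i}(-4)^i\,t^i(1+t)^{2m-2i}$. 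Combining, the $i$-th $\gamma$-coefficient of $E_n^\pm$ equals $\tfrac12\big(\gamma_{n,i}\pm\binom{m}{i}(-4)^i\big)$, so both $E_n^+$ and $E_n^-$ are $\gamma$-positive as soon as
\[ \gamma_{n,i}\ \ge\ \binom{m}{i}\,4^i \qquad (0\le i\le m), \]
the sign alternation taking care of the even indices for one polynomial and the odd indices for the other.

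The remaining, and main, task is this inequality, which I would prove by induction on $m$. The classical recurrence for the Eulerian $\gamma$-coefficients, $\gamma_{n,i}=(i+1)\gamma_{n-1,i}+2(n-2i)\gamma_{n-1,i-1}$ (standard, see e.g.\ \cite{athanasiadis}, and derivable either from $A_n(t)=((n-1)t+1)A_{n-1}(t)+t(1-t)A_{n-1}'(t)$ or from the interpretation of $\gamma_{n,i}$ as permutations with $i$ descents, no double descent and no final descent), iterated once expresses $\gamma_{n,i}$ as a combination of $\gamma_{n-2,i},\gamma_{n-2,i-1},\gamma_{n-2,i-2}$ whose coefficients, after setting $n=2m+1$ and $p=m-i$, are $(i+1)^2$, $\ 8pi+4p+2i$, and $8(2p+1)(p+1)$. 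Writing $c_{m,i}=\binom{m}{i}4^i$ and using Pascal's rule in the form $c_{m,i}=c_{m-1,i}+4c_{m-1,i-1}$, the induction hypothesis $\gamma_{n-2,j}\ge c_{m-1,j}$ reduces the step to checking that $[(i+1)^2-1]\,c_{m-1,i}+[8pi+4p+2i-4]\,c_{m-1,i-1}+8(2p+1)(p+1)\,c_{m-1,i-2}\ge0$; each bracket is nonnegative for $i\ge1$ and $m\ge2$, the only delicate cases for the middle bracket being $i=1$ (where it equals $12(m-1)-2$) and $i=m$ (where it equals $2m-4$), both handled by $m\ge2$, while $i=0$ gives the forced equality $\gamma_{n,0}=1$. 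With the base case $m=2$ (that is, $n=5$, where $(\gamma_{5,0},\gamma_{5,1},\gamma_{5,2})=(1,22,16)\ge(1,8,16)$) verified directly, the induction closes and the theorem follows. The hard part will be the $\gamma$-coefficient inequality: once the doubled recurrence is in hand its nonnegativity is a finite sign check, so I expect the derivation (or clean citation) of the recurrence and the careful treatment of the boundary indices $i\in\{0,1,m\}$ to be where the real work lies.
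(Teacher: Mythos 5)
Your proposal is correct, and I checked the computational core: two applications of the standard recurrence $\gamma_{n,i}=(i+1)\gamma_{n-1,i}+2(n-2i)\gamma_{n-1,i-1}$ do yield $\gamma_{n,i}=(i+1)^2\gamma_{n-2,i}+(8pi+4p+2i)\gamma_{n-2,i-1}+8(2p+1)(p+1)\gamma_{n-2,i-2}$ with $p=m-i$; all three coefficients are nonnegative on $0\le i\le m$, so substituting the induction hypothesis term by term is legitimate; the bracket inequality holds for $m\ge 2$ exactly as you analyze it, with the delicate values $12(m-1)-2$ at $i=1$ and $2m-4$ at $i=m$ computed correctly; and the base case $(\gamma_{5,0},\gamma_{5,1},\gamma_{5,2})=(1,22,16)\ge(1,8,16)$ is right. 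The expansion $(1-t)^{2m}=\sum_{i=0}^m\binom{m}{i}(-4)^i t^i(1+t)^{2m-2i}$ is immediate from $(1-t)^2=(1+t)^2-4t$, with no need for the $s=t+t^{-1}$ substitution. Your criterion $\gamma_{n,i}\ge\binom{m}{i}4^i$ is also consistent with the hypothesis $n\ge 5$ in exactly the right way: at $n=3$ one has $\gamma_{3,1}=2<4$, and indeed $\sum_{\pi\in\AAA_3}t^{\exc(\pi)}=1+t+t^2$ is not gamma-positive.

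The main point of comparison is that the paper contains no proof of this statement at all: it is imported verbatim as Theorem 4 of \cite{Dey-Siva-annals-com} and used as a black box in the proof of Theorem \ref{thm:gamma-pos-pap-even-odd-n-2mod4}. So your argument is not an alternative to an in-paper proof but a self-contained reconstruction of the cited ingredient. Note that your opening reduction --- the half-sum/half-difference decomposition $\frac12\bigl(A_n(t)\pm(1-t)^{n-1}\bigr)$ via equidistribution of $\exc$ with $\des$ and Mantaci's formula (Theorem \ref{thm:Mant_signed_exc}) --- is precisely the mechanism the paper itself deploys one level up in Section \ref{sec:gamma-positivity} to reduce Theorem \ref{thm:gamma-pos-pap-even-odd-n-2mod4} to this lemma. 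What your write-up adds beyond the citation is the quantitative comparison of the Eulerian gamma coefficients against the gamma expansion of $(1-t)^{2m}$, settled by the two-step induction; this makes the lemma depend only on Foata--Sch\"utzenberger gamma-positivity \cite{foata-schu} and the classical gamma recurrence, rendering the paper independent of \cite{Dey-Siva-annals-com} at this point.
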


\begin{lemma}
\label{lem:fgtofsqrgsqr}
Let $f,g$ be palindromic polynomials with same center of symmetry such that $f+g$ and $f-g$ are gamma-positive with same center of symmetry $a$. Then, for all positive integers $r$, the polynomials $f^r+g^r$ and $f^r-g^r$ are gamma-positive with center of symmetry $ra$. 
\end{lemma}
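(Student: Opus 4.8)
The plan is to reduce the statement to Lemma~\ref{lem:gamma-pos-under-prod}, which says gamma-positivity is preserved under products with centers of symmetry adding, together with the elementary fact that a nonnegative linear combination of gamma-positive polynomials sharing a single center of symmetry is again gamma-positive with that center. First I would set $p = f+g$ and $q = f-g$, so that by hypothesis both $p$ and $q$ are gamma-positive with center of symmetry $a$, and invert these to write $f = (p+q)/2$ and $g = (p-q)/2$.

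Next I would expand the two targets by the binomial theorem. Using $(p+q)^r + (p-q)^r = 2\sum_{i \text{ even}} \binom{r}{i} p^{r-i} q^{i}$ and $(p+q)^r - (p-q)^r = 2\sum_{i \text{ odd}} \binom{r}{i} p^{r-i} q^{i}$ and dividing by $2^r$, I obtain
\[
f^r + g^r = \frac{1}{2^{r-1}} \sum_{\substack{0 \le i \le r\\ i \text{ even}}} \binom{r}{i}\, p^{r-i} q^{i}, \qquad
f^r - g^r = \frac{1}{2^{r-1}} \sum_{\substack{0 \le i \le r\\ i \text{ odd}}} \binom{r}{i}\, p^{r-i} q^{i}.
\]
Each summand $p^{r-i} q^{i}$ is a product of $r-i$ copies of $p$ and $i$ copies of $q$, all gamma-positive with center $a$, so iterating Lemma~\ref{lem:gamma-pos-under-prod} shows that $p^{r-i} q^{i}$ is gamma-positive with center $(r-i)a + ia = ra$. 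Note that $2ra = r\cdot 2a$ is a nonnegative integer, so $ra$ is a legitimate (possibly half-integral) center of symmetry.

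Finally I would invoke the additivity step. Because every summand above has the same center of symmetry $ra$, each expands over the one fixed gamma basis $\{ t^{s}(1+t)^{2ra-2s} \}$ attached to center $ra$ (a summand of smaller span simply carries zero coefficients on the outer basis elements), and all of its $\gamma$-coefficients are nonnegative. Since the scalars $\binom{r}{i}/2^{r-1}$ are also nonnegative, the $\gamma$-coefficients of $f^r+g^r$ and of $f^r-g^r$ are nonnegative sums of nonnegative numbers, so both polynomials are gamma-positive with center $ra$. The one point I would isolate as a short standalone claim — and the only place needing care beyond routine binomial bookkeeping — is precisely this additivity: that gamma-positivity with a prescribed center of symmetry is closed under nonnegative linear combinations, which is what lets me pass from the gamma-positivity of the individual products $p^{r-i}q^i$ to that of the two sums.
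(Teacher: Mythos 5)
Your proof is correct, but it takes a different route than the paper: you replace the paper's induction with a direct binomial expansion. The paper proves the lemma by induction on $r$; the inductive step rests on the identities $f^{k+1}\pm g^{k+1}=\tfrac{1}{2}\left[(f+g)(f^{k}\pm g^{k})+(f-g)(f^{k}\mp g^{k})\right]$, applies Lemma \ref{lem:gamma-pos-under-prod} to each of the two products, and then uses (implicitly) the same additivity fact that you isolate. You instead solve this recursion outright: writing $p=f+g$ and $q=f-g$, the binomial theorem exhibits $f^r\pm g^r$ as the explicit nonnegative combination $\frac{1}{2^{r-1}}\sum\binom{r}{i}p^{r-i}q^{i}$ over even (respectively odd) $i$, and each summand is gamma-positive with center $ra$ by iterating Lemma \ref{lem:gamma-pos-under-prod}. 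Both arguments thus rest on the same two pillars --- the product lemma and the closure of gamma-positivity with a fixed center of symmetry under nonnegative linear combinations --- but yours is non-inductive and gives a closed form showing that $f^r\pm g^r$ lies in the cone generated by the products $p^{r-i}q^{i}$, while the paper's induction avoids all binomial bookkeeping and only ever needs sums of two terms. A point in your favor: you state the additivity step explicitly (including the observation that summands of smaller span expand with zero coefficients on the outer elements of the gamma basis for center $ra$), whereas the paper's proof invokes it silently when passing from the gamma-positivity of the two products to that of their average; making that step explicit is a genuine improvement in rigor, since it is the only place where the argument could conceivably go wrong.
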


\begin{proof}
We will prove this by induction on $r$. When $r=1$, the statement is true as $f+g$ and $f-g$ are gamma-positive with the same center of symmetry. Let us assume that for all positive integers $r \leq k$, the polynomials $f^r+g^r$ and $f^r-g^r$ are gamma-positive with center of symmetry $ra$. Now, consider $r=k+1$. It is easy to check that 
\[f^{k+1}+g^{k+1}= \frac{1}{2} \big[ (f+g)(f^{k}+g^{k})+ (f-g)(f^{k}-g^{k}) \big] ,\]
and 
\[f^{k+1}-g^{k+1}= \frac{1}{2} \big[ (f+g)(f^{k}-g^{k})+ (f-g)(f^{k}+g^{k}) \big] .\]
By Lemma \ref{lem:gamma-pos-under-prod}, it follows that $(f+g)(f^k+g^k)$ is gamma-positive with center of symmetry $ka+a=(k+1)a$ and the same holds for $(f-g)(f^k-g^k).$ Hence, $f^r+g^r$ and $f^r-g^r$ are gamma-positive with center of symmetry $ra$ for $r=k+1$. This completes the proof. 
\end{proof}

\begin{remark}
\label{rem:gamma-pos}
Using Theorem \ref{thm:unsigned excedance-modk} and Lemma 
\ref{lem:gamma-pos-under-prod}, it is immediate that the polynomials $\MPE_{n,r}^k(t)$ are gamma-positive. 
\end{remark}

\begin{remark}
\label{rem:not-gamma-pos}
From Theorem \ref{thm:signed excedance-modk} and Theorem \ref{thm:unsigned excedance-modk}, we can see that the polynomials 
$\MPE_{n,r}^{k,+}(t)$ and $\MPE_{n,r}^{k,-}(t)$ are not  palindromic unless $n$ is divisible by $k$. Therefore, there is no question of gamma-positivity unless $n$ is divisible by $k$. 
\end{remark}

Now, by using the above results, we give the proof of Theorem \ref{thm:gamma-pos-pap-even-odd-n-2mod4}.
We use the notation $\MPE_{n,r}^{k,\pm}(t)$ to refer to both
the polynomials 
$\MPE_{n,r}^{k,+}(t)$ and $\MPE_{n,r}^{k,-}(t)$.

\begin{proof}[\textbf{Proof of Theorem \ref{thm:gamma-pos-pap-even-odd-n-2mod4}}]
Let $n \equiv k \pmod {2k}$ with $n \geq 5k$. Suppose $n=2km+k$,  where $m$ is a positive integer with $m\geq 2$. Then, we have

\[\MPE_{2mk+k, r}^{k,\pm}(t) = \frac{1}{2} \left[ \sum_{\pi \in \MPE_{2mk+k,r}^{k}  }  t^{\exc(\pi)} \pm \sum_{\pi \in \MPE_{2mk+k,r}^{k}  }  \sgn(\pi) t^{\exc(\pi)}\right] .\]
By Theorem \ref{thm:unsigned excedance-modk} and Theorem \ref{thm:signed excedance-modk}, it follows that

\[ \MPE_{2mk+k,1}^{k,\pm}(t) = \frac{1}{2} \left[ A_{2m+1}(t)^k \pm  (1-t)^{2mk}  \right] ~ \text{for} ~ r=1, \]
and for $r \geq 2$
\[ \MPE_{2mk+k,r}^{k,\pm}(t) = \frac{1}{2} t^{k+1-r} \left[ A_{2m+1}(t)^k \pm 
(-1)^{(m^2r-1)(k+1-r)}   (1-t)^{2mk}  \right]. \] 
Now, by Lemma \ref{lem:gamma-pos-excedance}, the polynomials $A_{2m+1}(t)+(1-t)^{2m}$ and $A_{2m+1}(t)-(1-t)^{2m}$ are gamma-positive with centers of symmetry 
$m$. Therefore, by Lemma \ref{lem:fgtofsqrgsqr} it follows that the polynomials $\MPE_{2mk+k,1}^{\pm,k}(t)$ are gamma-positive with the center of symmetry $km$. Similarly, for $2 \leq r \leq k$, the polynomials $\MPE_{n,r}^{k,+}(t)$ and $\MPE_{n,r}^{k,-}(t)$  are gamma-positive with center of symmetry  $(k+1-r+2km)/2.$ 
\end{proof} 

%%%%%%%%%%%%%%%%%%%%%%%%%%%%%%%%%%%%%%%%%%%%%%%%%%%%%%%%%%%%%%%%%%%%%%%%%%%%%%%%
	 
\section*{Declaration of competing interest}	 
	
The authors declare that they have no conflict of interest to this work.
	
\section*{Data availability}	
	
No data was used for the research described in the article.

\section*{Acknowledgements}
The authors would like to thank Prof. Sivaramakrishnan Sivasubramanian for 
valuable comments and suggestions during the preparation of the manuscript. The first author acknowledges a NBHM Post-Doctoral Fellowship (File No. 0204/10(10)/2023/R{\&}D-II/2781) during the preparation of this work and profusely thanks National Board of Higher Mathematics, India for this funding. The first author also acknowledges excellent working conditions in the Department of Mathematics, Indian Institute
of Science. The second author thanks Indian Institute of Technology Bombay, India for financial support through the Institute Post-Doctoral Fellowship.

%%%%%%%%%%%%%%%%%%%%%%%%%%%%%%%%%%%%%%%%%%%%%%%%%%%%%%%%%%%%%%%%%%%%%

\end{document}